\def\R{\mathbb{R}}
\def\mc{\mathcal}
\def\mb{\mathbb}
\def\Lt{L^2([0,1])}
\newtheorem{lemma}{Lemma}
\newtheorem{theorem}{Theorem}
\newtheorem{corollary}{Corollary}
\newenvironment{proof}{\paragraph{Proof:}}{\hfill$\square$}
\begin{document}

\title{Numerical Approximation of Andrews Plots with Optimal Spatial-Spectral Smoothing}
\author{Mitchell Rimerman$^1$, Nate Strawn$^1$}

\date{%
    $^1$Georgetown University\\
    \today
}

\maketitle

\begin{abstract}
Andrews plots provide aesthetically pleasant visualizations of high-dimensional datasets. This work proves that Andrews plots (when defined in terms of the principal component scores of a dataset) are optimally ``smooth'' on average, and solve an infinite-dimensional quadratic minimization program over the set of linear isometries from the Euclidean data space to $L^2([0,1])$. By building technical machinery that characterizes the solutions to general infinite-dimensional quadratic minimization programs over linear isometries, we further show that the solution set is (in the generic case) a manifold. To avoid the ambiguities presented by this manifold of solutions, we add ``spectral smoothing'' terms to the infinite-dimensional optimization program to induce Andrews plots with optimal spatial-spectral smoothing. We characterize the (generic) set of solutions to this program and prove that the resulting plots admit efficient numerical approximations. These  spatial-spectral smooth Andrews plots tend to avoid some ``visual clutter'' that arises due to the oscillation of trigonometric polynomials. 
\end{abstract}

\section{Introduction}

Data visualization techniques form an essential toolkit for scientists seeking to glean insights from data, and scatterplots constitute an incredibly straightforward tool for building intuition from data. However, scatter plots ``compress" data in a lossy manner, which means such intuitions need careful vetting. Andrews plots \cite{andrews1972plots} offer a way to double-check insights gleaned from conventional 2D or 3D scatterplots because they completely preserve the information within a dataset. Figure \ref{fig1} displays an example. While 2D and 3D scatterplots yield visualizations where data points are represented by non-overlapping objects (points or dots), embedding high-dimensional data in 2D or 3D necessarily collapses distances between points in general. Andrews plots trade off ``visual clutter'' for fidelity of distances. 

\begin{center}
\begin{figure}
\centering
\includegraphics[scale=0.4]{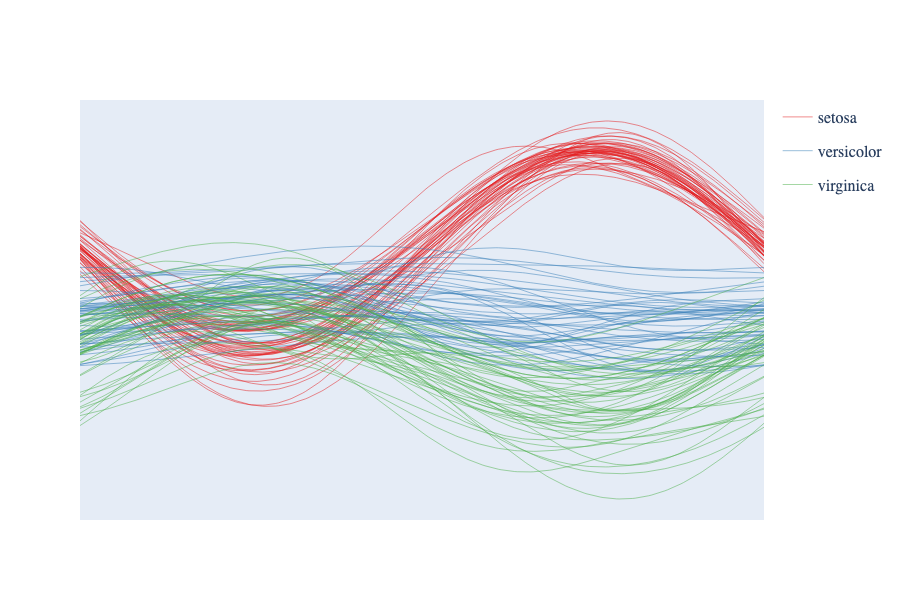}
\caption{An example of Andrews plots for Anderson's Iris dataset \cite{anderson1936species} .}
\label{fig1}
\end{figure}
\end{center}

In \cite{andrews1972plots}, Andrews suggests a map from $\Phi:\R^d\to L^2([0,1])$ of the form $x\mapsto f_x(t)$ where
\begin{align*}
f_x(t)=&(u_1^T x) + (u_2^T x) \sqrt{2}\cos(2\pi t) + (u_3^T x) \sqrt{2}\sin(2\pi t)\\
& + (u_4^T x)  \sqrt{2}\cos(4\pi t) + (u_5^T x) \sqrt{2}\sin(4\pi t) +\cdots
\end{align*}
where $\{u_i\}_{i=1}^d$ is an orthonormal basis of $\R^n$. Such maps are isometries from $\R^d$ to $L^2([0,1])$. In \cite{andrews1972plots}, Andrews suggest using the orthonormal basis from principal component analysis (PCA) to maintain compatibility with the hypothesis tests and confidence intervals he develops. 

The work \cite{strawn2022filament} develops 3D extensions of Andrews plots that mitigate ``visual clutter'' issues inherent to Andrews plots by plotting 1D curves in a three-dimensional ``drag-to-rotate'' environment. The theory developed there proves that an orthonormal basis of principal component vectors  arises when we attempt to minimize the ``mean quadratic variation" over all the plots. That is, a principal component analysis basis produces ``optimally smooth'' 2D Andrews plots, and this optimality holds over the entire set of isometries from $\R^d$ to $L^2([0,1])$. 

The work \cite{strawn2022filament} also demonstrated that the ``optimally smooth'' 2D Andrews plots generically form a manifold parametrized by a product of circles (a generalized torus). This provides the opportunity to obtain optimally smooth 2D Andrews plots that also admit an asymptotic ``tour" property ensuring that the map $x\to f_x(t_0)$ is nearly a projection from $\R^d$ to $\R^2$ for all $t_0$. The tour property ensures that plots avoid ``flocking'' behavior that causes plots to ``braid'' together. 

In \cite{strawn2022filament}, it was suggested that a substantially similar proof suffices to show that standard Andrews plots with PCA coefficients also minimize the mean quadratic variation functional. However, much like the case of 2D Andrews plots, there are degrees of freedom within the set of such minimizers. 

\subsection{Our Contributions}

In this paper, we provide a characterization of the minimizers of the mean (across a dataset) quadratic variation of isometries from $\R^d$ to $\mc{H}$, where $\mc{H}$ is the Sobolev space
\[
\mc{H} = \{f\in \Lt: f^\prime\in \Lt\}.
\] 
This characterization includes the plots involving PCA coefficients originally suggested by Andrews \cite{andrews1972plots}. We prove this characterization via a new theoretical technique that offers the possibility of further extensions. Our main technique, Theorem \ref{thm1}, characterizes the solutions to certain infinite-dimensional quadratic optimization programs on isometries from $\R^d$ to $\ell^2(\mb{Z})$. By utilizing the inverse discrete-time Fourier transform, we specialize this result to characterize the minimizers of the mean quadratic variation in Corollary \ref{corMMQV}. 

 This characterization reveals numerous degrees of freedom due to the translation-invariant subspaces
\[
\text{span}\{\cos(2\pi k t), \sin(2\pi k t)\}.
\]
In particular, any map of the form
\begin{align*}
x\to (u_1^T x)&+ \sqrt{2}(u_2^Tx)(a_1\cos(2\pi t)+b_1\sin(2\pi t))\\
& + \sqrt{2}(u_2^Tx)(b_1\cos(2\pi t)-b_1\sin(2\pi t))\\
& + \sqrt{2}(u_3^Tx)(a_2\cos(4\pi t)+b_2\sin(4\pi t))\\
& + \sqrt{2}(u_4^Tx)(b_2\cos(4\pi t)-a_2\sin(4\pi t))\\
& + \cdots
\end{align*}
(where $a_i^2+b_i^2=1$ for all $i$) constitutes a linear isometry which minimizes the mean quadratic variation of the dataset $\{x_i\}_{i=1}^n\subset\R^d$ over the set of linear isometries from $\R^d$ to $\mc{H}$. 

These degrees of freedom motivate the second component of our contributions. Heuristically, the oscillation of trigonometric polynomials contributes to multiple crossings of Andrews plots. Smoothing Fourier coefficients dampens these oscillations. Therefore, for $\alpha>0$, we define the spatial-spectral quadratic variation of $f\in L^2([0,1])$ by
\[
\frac{\alpha}{4\pi^2}\Vert f^\prime\Vert_{\Lt}^2 + \sum_{k\in\mb{Z}} \vert \hat f[k+1]-\hat f[k]\vert^2
\]
where the $\hat f[k]$ are Fourier coefficients of $f$.  We show that minimizing the mean spatial-spectral quadratic variation over the set of isometries from $\R^d$ to $\mc{H}$ fits in the framework of Theorem \ref{thm1}, and also that the solutions to this program may be approximated in an efficient manner. These results are encapsulated in Theorem \ref{thm2}. 

\begin{center}
\begin{figure}
\centering
\includegraphics[scale=0.4]{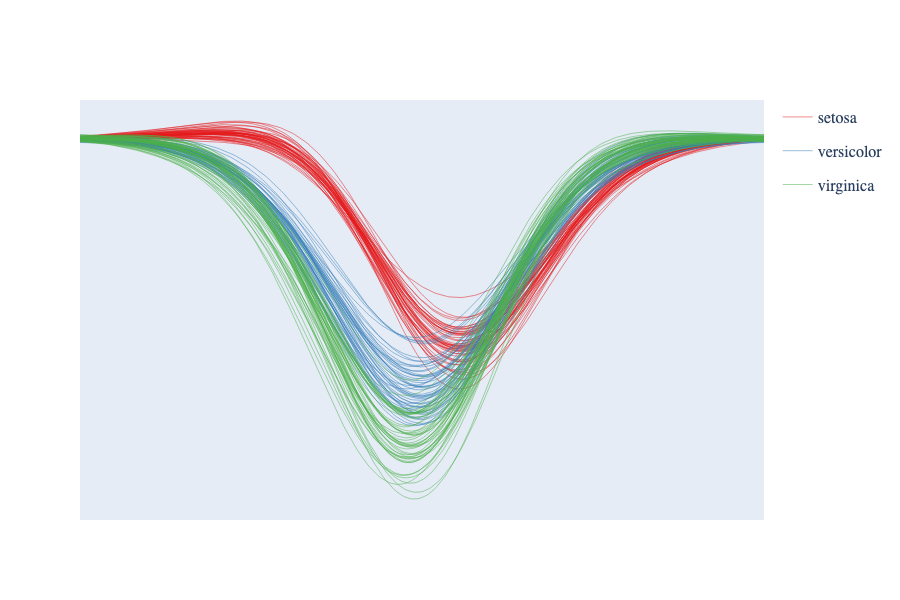}
\caption{Andrews plots with optimal spatial-spectral smoothing for  Anderson's Iris dataset.}
\label{fig2}
\end{figure}
\end{center}

\subsection{Organization}
In Section 2, we prove our main technical tool, Theorem \ref{thm1} and use it to characterize the minimizers of the mean quadratic variation in Corollary \ref{corMMQV}. In Section 3, we discuss the minimum mean spatial-spectral quadratic variation problem, prove that it fits in the framework of Theorem \ref{thm1}, and then prove that the optimal isometries admit numerical approximations. These results are summarized in Theorem \ref{thm2}. In Section 4, we provide several examples for different datasets. Section 5 concludes the paper with some interesting open problems.

\subsection{Notation}

Let $\{x_i\}_{i=1}^n\subset\R^d$ denote a dataset, and let 
\[
X= \begin{pmatrix}
x_1 & x_2 & \cdots & x_n
\end{pmatrix}
\]
denote the data matrix with data in columns. We let $\ell^2=\ell^2(\mb{Z})$ denote the Hilbert space of (possibly complex) square summable sequences over the set of integers $\mb{Z}$. We let $\mc{I}$ denote the collection of linear isometries from $\R^d\to \ell^2$. For $x,y\in\R^d$, let $x^Ty$ denote the standard inner product on $\R^d$, and for $f,g\in \ell^2$, we let 
\[
\langle f,\:g\rangle_{\ell^2} = \sum_{k\in\mb{Z}}f[k]g[k]
\]
We consider general optimization programs of the form
\begin{align}
\min_{\Phi\in \mc{I}} \sum_{i=1}^n \langle A \Phi(x_i), \Phi(x_i)\rangle_{\ell^2}\label{genOpt}
\end{align}
where $A$ is any self-adjoint, positive definite operator on $\ell^2$ with a purely discrete spectrum (see \cite{schmudgen2012unbounded}). We specify the domain of $A$ by
\[
\text{dom}(A) = \{f\in \ell^2: Af\in\ell^2\}
\]
and let $\sigma(A)$ denote the spectrum of $A$ (that is, $\lambda\in\sigma(A)$ if $A-\lambda I$ is not surjective or not injective). Since we only consider cases where $A$ has purely discrete spectrum, each $\lambda\in\sigma(A)$ is an eigenvalue of $A$ with finite multiplicity. We let 
\[
\{\lambda_i(A)\}_{i=1}^d \subset \sigma(A)
\]
denote the ``$d$ lowest eigenvalues of $A$'' (counting multiplicity) where
\[
\lambda_1(A)\leq \lambda_2(A)\leq \cdots \leq \lambda_d(A).
\]
We also let 
\[
\sigma_1(X) \geq \sigma_2(X) \geq \cdots \geq \sigma_d(X)
\]
denote the singular values of a data matrix $X$.

\section{Optimal Isometries from $\R^d$ to $\ell^2(\mb{Z})$}

We prove Theorem \ref{thm1} in this section, which characterizes solutions to certain quadratic minimization programs over spaces of isometries from $\R^d$ to $\ell^2$. First, Lemma \ref{lemIsoFac} helps us represent such isometries in terms of an orthonormal basis of left-singular vectors and an orthonormal collection in $\ell^2$.

\begin{lemma}\label{lemIsoFac}
If $\Phi$ is a linear isometry from $\R^d$ to some Hilbert space $h$ and $\{u_i\}_{i=1}^d$ is some orthonormal basis of $\R^d$, then there is an orthonormal collection $\{\phi_i\}_{i=1}^d\subset H$ such that
\[
\Phi(x) = \sum_{i=1}^d (u_i^T x) \phi_i
\]
for all $x\in\R^d$.
\end{lemma}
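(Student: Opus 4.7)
The plan is to construct the $\phi_i$ by applying $\Phi$ directly to the basis, and then to verify orthonormality and the representation formula.

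First I would define $\phi_i := \Phi(u_i)$ for each $i=1,\dots,d$. The key observation is that a linear isometry preserves not just norms but also inner products: by the polarization identity, $\langle \Phi(x), \Phi(y)\rangle_H = x^T y$ for all $x,y \in \R^d$, since $\|\Phi(z)\|_H^2 = \|z\|_2^2$ for every $z$. Applying this to the basis vectors yields $\langle \phi_i, \phi_j\rangle_H = u_i^T u_j = \delta_{ij}$, so $\{\phi_i\}_{i=1}^d$ is an orthonormal collection in $H$.

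Next I would establish the representation formula. Since $\{u_i\}_{i=1}^d$ is an orthonormal basis of $\R^d$, every $x\in\R^d$ admits the expansion
\[
x = \sum_{i=1}^d (u_i^T x)\, u_i.
\]
Applying $\Phi$ and using its linearity gives
\[
\Phi(x) = \sum_{i=1}^d (u_i^T x)\, \Phi(u_i) = \sum_{i=1}^d (u_i^T x)\, \phi_i,
\]
which is the desired identity.

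There is no real obstacle here; the only subtlety is remembering that ``linear isometry'' in this context means norm-preserving linear map, so inner-product preservation must be obtained via polarization rather than assumed. Once that is in hand, everything reduces to the standard expansion of a vector in an orthonormal basis followed by linearity of $\Phi$.
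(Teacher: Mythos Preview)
Your proof is correct and follows essentially the same approach as the paper: define $\phi_i = \Phi(u_i)$, use the isometry property to get orthonormality, and apply linearity to the orthonormal expansion of $x$. Your explicit invocation of the polarization identity is a minor elaboration of what the paper simply calls ``the isometry property,'' but the argument is otherwise identical.
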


\begin{proof}
Set $\phi_i=\Phi(u_i)$ for $i=1,\ldots, d$ and observe that $\{\phi_i\}_{i=1}^d$ inherits orthonormality from $\{u_i\}_{i=1}^d$ via the isometry property of $\Phi$. Linearity gives us
\[
\Phi(x) = \Phi\left( \sum_{i=1}^d (u_i^T x) u_i\right)=  \sum_{i=1}^d (u_i^T x) \Phi(u_i) =  \sum_{i=1}^d (u_i^T x) \phi_i
\]
for all $x\in\R^d$.
\end{proof}\\

Lemma \ref{lemIsoFac} allows us to represent infinite-dimensional quadratic programs over spaces of projections as infinite-dimensional quadratic programs subject to a finite number of quadratic (orthogonality) conditions. We now state an prove Theorem \ref{thm1}.

\begin{theorem}\label{thm1}
Let $\{x_i\}_{i=1}^n\subset\R^d$ and suppose $A$ is a self-adjoint, positive semi-definite on $\ell^2$ with a purely discrete spectrum. Then
\[
\sum_{i=1}^n \langle A \Phi(x_i), \Phi(x_i)\rangle_{\ell^2} \geq \sum_{i=1}^d \lambda_i(A)\sigma_i(X)^2,
\]
for all $\Phi\in\mc{I}$, and this lower bound is attained for all $\Phi$ such that
\begin{align}
\Phi(x) = \sum_{i=1}^d (u_i^T x)\phi_i\label{eqnIsoFac}
\end{align}
where $\{u_i\}_{i=1}^d$ form an orthonormal basis of left singular vectors of $X$ corresponding to singular values $\{\sigma_i\}_{i=1}^d$ in a respective fashion, and $\{\phi_i\}_{i=1}^d$ form an orthonormal set of eigenvectors of $A$ corresponding to the eigenvalues $\{\lambda_i\}_{i=1}^d$ in a respective fashion. If the singular values $\{\sigma_i(X)\}_{i=1}^d$ are distinct and non-zero, any minimizer $\Phi$ satisfies (\ref{eqnIsoFac}) with $\{\phi_i\}_{i=1}^d$ an orthonormal set of eigenvectors of $A$ corresponding to the eigenvalues $\{\lambda_i\}_{i=1}^d$.
\end{theorem}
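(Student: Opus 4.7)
My plan is to apply Lemma \ref{lemIsoFac} with a specific choice of orthonormal basis of $\R^d$, reduce the optimization to a weighted finite-dimensional problem in the quantities $\alpha_j := \langle A\phi_j, \phi_j\rangle_{\ell^2}$, and then combine the Ky Fan / min-max principle for self-adjoint operators with an Abel-summation step and the rearrangement inequality.

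First, by Lemma \ref{lemIsoFac} I am free to take $\{u_i\}_{i=1}^d$ to be an orthonormal basis of left-singular vectors of $X$ associated (in respective fashion) to $\sigma_1(X) \geq \cdots \geq \sigma_d(X)$, so that $\Phi(x) = \sum_i (u_i^T x)\phi_i$ for some orthonormal $\{\phi_i\}\subset \ell^2$. Expanding the functional and using $XX^T u_j = \sigma_j(X)^2 u_j$ kills the off-diagonal cross-terms, leaving
\[
\sum_{i=1}^n \langle A\Phi(x_i), \Phi(x_i)\rangle_{\ell^2} = \sum_{j,k=1}^d \langle A\phi_j,\phi_k\rangle_{\ell^2}\, u_j^T(XX^T)u_k = \sum_{j=1}^d \sigma_j(X)^2\,\langle A\phi_j, \phi_j\rangle_{\ell^2}.
\]
Writing $s_j := \sigma_j(X)^2$ (nonincreasing) and $\alpha_j := \langle A\phi_j, \phi_j\rangle_{\ell^2}$ (interpreted as $+\infty$ off the form domain, in which case the bound is vacuous), the problem becomes to minimize $\sum_j s_j \alpha_j$ subject to the orthonormality of $\{\phi_j\}$.

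The key structural input for the lower bound is Ky Fan's principle: for any orthonormal $\{\phi_j\}_{j=1}^d$ in the form domain and any subset $S \subset \{1,\ldots,d\}$, $\sum_{j\in S}\alpha_j \geq \sum_{j=1}^{|S|}\lambda_j(A)$. Denoting by $\alpha_{(1)}\leq \cdots \leq \alpha_{(d)}$ the increasing rearrangement, this gives the partial-sum bounds $\sum_{k=1}^j \alpha_{(k)} \geq \sum_{k=1}^j \lambda_k(A)$. Because $(s_j)$ is nonincreasing, the rearrangement inequality yields $\sum_j s_j \alpha_j \geq \sum_j s_j \alpha_{(j)}$, and the Abel identity
\[
\sum_{j=1}^d s_j \alpha_{(j)} = s_d \sum_{j=1}^d \alpha_{(j)} + \sum_{j=1}^{d-1}(s_j - s_{j+1})\sum_{k=1}^j \alpha_{(k)},
\]
whose coefficients $s_d$ and $s_j - s_{j+1}$ are nonnegative, allows me to replace each partial sum by its Ky Fan lower bound and then reassemble via Abel summation to get $\sum_j s_j\lambda_j(A)$. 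Attainment at the displayed $\Phi$ is automatic: taking $\phi_j$ to be orthonormal eigenvectors with $A\phi_j = \lambda_j(A)\phi_j$ makes every $\alpha_j$ equal to $\lambda_j(A)$ and every inequality tight.

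For the uniqueness assertion under distinct positive singular values, I would trace equality back through each of the three inequalities. The strict inequalities $s_1 > \cdots > s_d > 0$ make every Abel coefficient strictly positive, so equality forces $\sum_{k=1}^j \alpha_{(k)} = \sum_{k=1}^j \lambda_k(A)$ for every $j$, hence $\alpha_{(j)} = \lambda_j(A)$; the same strict decrease of $(s_j)$ forces equality in the rearrangement step, giving $\alpha_j = \alpha_{(j)} = \lambda_j(A)$. The final passage from $\langle A\phi_j, \phi_j\rangle = \lambda_j(A)$ to ``$\phi_j$ is an eigenvector of $A$ with eigenvalue $\lambda_j(A)$'' is where I expect the main obstacle, because the spectrum of $A$ may have multiplicities. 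I would handle this by inducting on the distinct eigenvalue levels of $A$: for a block of consecutive indices sharing a common value of $\lambda_j(A)$, the equality case of Ky Fan implies that the span of the corresponding $\phi_j$'s must coincide with the associated eigenspace, so each $\phi_j$ individually lies in that eigenspace rather than merely in a block spectral subspace.
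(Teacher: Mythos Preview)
Your proof is correct and follows essentially the same route as the paper's: Lemma \ref{lemIsoFac} with the singular-vector basis, reduction to $\sum_j \sigma_j(X)^2\,\langle A\phi_j,\phi_j\rangle$, Abel summation combined with the Ky Fan/Schur partial-sum inequalities, and then the equality case traced back by induction through the spectral decomposition. Your rearrangement step ($\alpha_j \to \alpha_{(j)}$) is harmless but unnecessary, since Ky Fan applied to $S=\{1,\dots,k\}$ already gives $\sum_{j\le k}\alpha_j \ge \sum_{j\le k}\lambda_j(A)$ directly --- which is all Abel summation requires --- and the paper simply omits this detour.
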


\begin{proof}
For any linear isometry $\Phi\in\mc{I}$, set $\phi_i = \Phi(u_i)$ where $\{u_i\}_{i=1}^d\subset\R^d$ is the orthonormal basis of left singular vectors stated in the theorem. By Lemma \ref{lemIsoFac}, there is an orthonormal collection $\{\phi_i\}_{i=1}^d\subset \ell^2 $ such that
\[
\Phi(x) = \sum_{i=1}^d (u_i^T x)\phi_i
\]
for all $x\in\R^d$. Then
\[
\langle A \Phi(x),\: \Phi(x)\rangle = \sum_{i=1}^d \sum_{j=1}^d (u_i^Tx)(u_j^Tx) \langle A \phi_i,\: \phi_j\rangle = \text{trace}(U^Txx^T U [\langle A \phi_i,\:\phi_j\rangle]),
\]
and summing over the dataset yields
\[
\sum_{i=1}^n \langle A \Phi(x_i),\: \Phi(x_i)\rangle =  \text{trace}(U^TXX^T U [\langle A \phi_i,\:\phi_j\rangle])=\sum_{i=1}^d \langle A\phi_i,\phi_i\rangle\sigma_i(X)^2.
\]
Set $D_i = \langle A\phi_i,\phi_i\rangle$ and $\sigma_i=\sigma_i(X)$ and apply summation by parts to obtain
\[
\sum_{i=1}^d \langle A\phi_i,\phi_i\rangle\sigma_i(X)^2 = \sigma_d^2\sum_{i=1}^dD_i + \sum_{k=1}^{d-1} (\sigma_i^2-\sigma_{i+1}^2)\sum_{j=1}^k D_j
\]
On the other hand, \cite{bownik2017schur} (or more specifically \cite{schur1923uber}), $\sum_{j=1}^k D_j \geq \sum_{j=1}^k \lambda_j(A)$, and the non-increasing nature of $\sigma_i$ yields
\[
\sum_{i=1}^d \langle A\psi_i,\psi_i\rangle\sigma_i(X)^2 \geq \sigma_d^2\sum_{i=1}^d\lambda_i(A) + \sum_{k=1}^{d-1} (\sigma_k^2-\sigma_{k+1}^2)\sum_{j=1}^k \lambda_i(A) = \sum_{i=1}^d \lambda_i(A)\sigma_i(X)^2.
\]
Note that projections of the form (\ref{eqnIsoFac}) satisfy this lower bound.

On the other hand, the lower bound is attained if and only if $\sum_{i=1}^d D_i=\sum_{i=1}^d\lambda_i(A)$ if $\sigma_d(X)\not=0$ and
\[
\sum_{i=1}^k D_i = \sum_{j=1}^k \lambda_i(A)
\]
for all $k=1,\ldots, d-1$ such that $\sigma_k^2\not=\sigma_{k+1}^2$. By the spectral theorem for self-adjoint unbounded operators from \cite{schmudgen2012unbounded} (and since we have assumed that $A$ has a purely discrete spectrum), if the $\sigma_k$'s are all unique and non-zero, we have that the $\phi_i$ in $(\ref{eqnIsoFac})$ must be eigenvectors of $A$ with eigenvalues $\lambda_i(A)$ via an induction argument.
\end{proof}

\paragraph{Discussion} Without a discrete spectrum, one can show that the lower bound still exists, but it may never be attainable. Additionally, if the singular values of $X$ are not distinct, it is still possible to characterize the minimizers of (\ref{genOpt}), but the level of complexity increases substantially.\\

\subsection{Optimal mean quadratic variation of Andrews plots using PCA scores}

We now let $\mc{I}(\R^d,\:\mc{H})$ denote the linear isometries from $\R^d$ to $\Lt$ whose images are in $\mc{H}$. While $\mc{I}(\R^d,\:\mc{H})$ is not closed in the operator norm topology inherited from $\Lt$, our approach to optimization does not require any consideration of limits. We let $\mc{F}^{-1}: \Lt\to \ell^2$ denote the inverse discrete time Fourier transform, and use $\hat f$ to denote $\mc{F}^{-1} f$ for any $f\in \Lt$. Recall the following properties of $\mc{F}^{-1}$:
\begin{enumerate}
\item With $\hat f[k]$ the $k$th coordinate of $\hat f$, we have
\[
f(t) = \sum_{k\in\mb{Z}} \hat f[k]e^{2\pi \iota k t}
\]
where $\iota=\sqrt{-1}$.
\item If $f, f^\prime\in L$, then $\widehat{f^\prime}\in \ell^2$ and
\[
\widehat{f^\prime}[k] = 2\pi \iota k \hat f[k]
\]
for all $k\in\mb{Z}$. 
\item When $f$ is real valued,
\[
\hat f[-k] = \overline{\hat f[k]}.
\]
\end{enumerate}

We let $\text{span}_{\mb{R}}$ and $\text{span}_{\mb{C}}$ denote the span of vectors using real coefficients and complex coefficients respectively. We now state and prove the characterization of minimizers of the mean quadratic variation. In particular, observe that the set of solutions always includes standard Andrews plots where the coefficients come from PCA scores.

\begin{corollary}\label{corMMQV}
Suppose $X\in\R^{d\times n}$ is a data matrix and $\{u_i\}_{i=1}^d$ is an orthonormal basis of left singular vectors of $X$ associated with the singular values $\{\sigma_i(X)\}_{i=1}^d$ in a respective fashion. Let $D$ denote the differential operator ($f^\prime = D f$) on $\mc{H}$. For the minimum mean quadratic variation problem
\[
\min_{\Psi\in \mathcal{I}(\R^d, \mc{H})} \frac{1}{n} \sum_{i=1}^n\Vert D \Phi(x_i)\Vert^2,
\]
the lower bound is $\frac{1}{n}\sum_{k=2}^d 4\pi^2 \lfloor k/2\rfloor^2\sigma_k(X)^2$. This lower bound is attained for all $\Phi$ of the form
\[
\Phi(x) = \pm(u_1^Tx){\bf 1} + \sum_{k=2}^d (u_k^T x) \phi_k
\]
where $\{\phi_{2j}, \phi_{2j+1}\}$ is an orthonormal basis for
\[
\text{span}\{\sqrt{2}\cos(2\pi j t),\: \sqrt{2}\sin(2\pi j t)\}
\]
for all $j\geq 1$. These are the only solutions if the $\sigma_i(X)$'s are all unique and non-zero.
\end{corollary}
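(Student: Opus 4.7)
The plan is to reduce this to Theorem~\ref{thm1} via the inverse discrete-time Fourier transform. Under the unitary correspondence between real-valued functions in $\Lt$ and the Hermitian-symmetric sequences in $\ell^2$ (property (3) of $\mc{F}^{-1}$), property (2) turns the differential operator $D$ into multiplication by $2\pi\iota k$, so for any $f\in\mc{H}$,
\[
\Vert D f\Vert_{\Lt}^2 = \sum_{k\in\mb{Z}} 4\pi^2 k^2 \vert \hat f[k]\vert^2 = \langle A \hat f,\: \hat f\rangle_{\ell^2},
\]
where $A$ is the multiplication operator $\hat f[k]\mapsto 4\pi^2 k^2\hat f[k]$ on its natural dense domain. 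This $A$ is self-adjoint, positive semi-definite, and has purely discrete spectrum: the eigenvalue $0$ has multiplicity one (eigenvector $\delta_0$), and for each $j\geq 1$ the eigenvalue $4\pi^2 j^2$ has multiplicity two (eigenvectors $\delta_j,\delta_{-j}$). Enumerated in non-decreasing order, $\lambda_k(A)=4\pi^2\lfloor k/2\rfloor^2$.

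Applying Theorem~\ref{thm1} to this $A$ yields the lower bound $\sum_{k=1}^d 4\pi^2\lfloor k/2\rfloor^2\sigma_k(X)^2$ (and dividing by $n$ gives the claimed bound, noting that $\lambda_1(A)=0$ eliminates the $k=1$ term). The key technical point is that this bound is attainable by an isometry whose image actually lies in the Hermitian-symmetric subspace, hence corresponds via $\mc{F}$ to an honest real-valued $\mc{H}$-valued isometry. This holds because each relevant eigenspace of $A$ admits a Hermitian-symmetric orthonormal basis: the $\lambda=0$ eigenspace is spanned by $\delta_0$ (corresponding to $\mathbf{1}$), and for each $j\geq 1$, the Hermitian-symmetric part of $\text{span}_{\mb{C}}\{\delta_j,\delta_{-j}\}$ is the real 2D subspace corresponding under $\mc{F}$ to $\text{span}_{\mb{R}}\{\sqrt{2}\cos(2\pi j t),\:\sqrt{2}\sin(2\pi j t)\}$.

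To produce the explicit family of minimizers, take $\phi_1 = \pm \mathbf{1}$ (the only unit-norm real functions in the $\lambda_1=0$ eigenspace) and, for each $j\geq 1$, take $\{\phi_{2j},\phi_{2j+1}\}$ to be any orthonormal basis of $\text{span}_{\mb{R}}\{\sqrt{2}\cos(2\pi j t),\:\sqrt{2}\sin(2\pi j t)\}$. Then $\Phi(x)=\sum_{k=1}^d (u_k^T x)\phi_k$ realizes the lower bound. For the uniqueness statement, I would invoke the rigidity clause of Theorem~\ref{thm1}: when the singular values are distinct and nonzero, every minimizer must have $\phi_k$ an eigenvector of $A$ with eigenvalue $\lambda_k(A)$, and imposing Hermitian symmetry (equivalently, that $\Phi$ maps into real $\Lt$) then forces exactly the parametrization in the statement.

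The main obstacle is bookkeeping around the real-valued constraint, since Theorem~\ref{thm1} is stated over (complex) $\ell^2$ while $\mc{H}\subset \Lt$ contains only real-valued functions. One either restricts the theorem to the real Hilbert space of Hermitian-symmetric sequences (on which $A$ and its spectral data restrict cleanly), or applies the complex version and then verifies that the attaining eigenvectors can be chosen Hermitian symmetric; either route hinges on the observation that the bottom eigenspaces of $A$ are invariant under the Hermitian-symmetry projection. Once this is in hand, the multiplicity-two structure of the eigenvalues $4\pi^2 j^2$ explains exactly the circle's worth of freedom $\{\phi_{2j},\phi_{2j+1}\}$ appearing in the corollary.
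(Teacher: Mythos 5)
Your proposal is correct and follows essentially the same route as the paper's own proof: pass to Fourier coefficients so that $\Vert Df\Vert^2=\langle A\hat f,\hat f\rangle$ for the diagonal operator with entries $4\pi^2k^2$, apply Theorem~\ref{thm1} over the larger class of complex $\ell^2$-valued isometries to get the lower bound $\sum_k 4\pi^2\lfloor k/2\rfloor^2\sigma_k(X)^2$, and then use the Hermitian-symmetry (real-valuedness) constraint to show the bottom eigenspaces admit real bases, yielding $\phi_1=\pm\mathbf{1}$ and the $\cos/\sin$ spans, with uniqueness from the rigidity clause of Theorem~\ref{thm1}. Your explicit attention to the real-versus-complex bookkeeping is exactly the point the paper handles by imposing $\widehat{\Psi(x)}[-k]=\overline{\widehat{\Psi(x)}[k]}$, so no gap remains.
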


\begin{proof}
The Plancheral theorem gives us
\begin{align}
\Vert D f\Vert_{\Lt}^2 = \Vert \mc{F}^{-1} Df\Vert_{\ell^2}^2= \sum_{k\in\mb{Z}} \vert 2\pi i k \hat f[k]\vert^2 = \sum (4\pi^2 k^2 \hat f[k])\overline{\hat f[k]} = \langle A \hat f, \hat f\rangle\label{mmqvEq}
\end{align}
for the diagonal operator $A$ with $A_{k,k} = 4\pi^2 k^2$ for all $k\in\mb{Z}$. By Lemma \ref{lemIsoFac}, $\Phi\in \mc{I}(\R^d, \mc{H}) \subset \mc{I}(\R^d, \Lt)$ satisfies
\[
\Phi(x) = \sum_{i=1}^d (u_i^Tx) \phi_i
\]
for all $x\in\R^d$ where $\{u_i\}_{i=1}^d$ is an orthonormal basis of left singular vectors of $X$ and $\{\phi_i\}_{i=1}^d\subset \mc{H}$ is orthonormal in the inner product on $\Lt$. Next note that $\widehat{\Phi(x)} = \sum_{i=1}^d (u_i^Tx)\hat\phi_i$ and $\Psi\in \mc{I}$ if $\Psi(x) = \sum_{i=1}^d (u_i^Tx)\hat\phi_i$. Given this identification and the equality (\ref{mmqvEq}), we conclude that $\Phi$ minimizes
\[
\sum_{i=1}^n \Vert D \Phi(x_i)\Vert_{\Lt}^2
\]
over $\Phi\in \mc{I}(\R^d, \mc{H})$ if and only if $\Psi$ minimizes
\[
\sum_{i=1}^n \langle A \Psi(x_i),\: \Psi(x_i)\rangle
\]
over $\Psi\in \mc{L}$ subject to the additional condition that 
\[
\widehat{\Psi(x)}[-k] = \overline{\widehat{\Psi(x)}[k]}
\]
for all $k\in\mb{Z}$. 

Since $A$ is diagonal, it is trivially self-adjoint and $\sigma(A)=\{0, 4\pi^2, 16\pi^2,\ldots\}$ where $0$ has multiplicity $1$ and the other eigenvalues have multiplicity $2$. These observations show that $A$ is self-adjoint, positive semi-definite, and has a purely discrete spectrum, and hence Theorem \ref{thm1} applies to the optimization program
\[
\min_{\Psi\in \mc{I}} \sum_{i=1}^n \langle A \Psi(x_i), \Psi(x_i)\rangle.
\]
However, note that this holds over all linear isometries from $\R^d$ to complex sequences in $\ell^2$. Since this set of linear isometries is larger than the set of linear isometries having the form of $\Psi$ above, we conclude
\[
\sum_{i=1}^n \Vert D \Phi(x_i)\Vert_{\Lt}^2\geq\sum_{k=2}^d 4\pi^2 \lfloor k/2\rfloor^2\sigma_k(X)^2
\]
from the lower bound in Theorem \ref{thm1}.

Let $\{e_i\}_{i\in\mathbb{Z}}$ denote the canonical orthonormal basis of $\ell^2$, and note that $\{\psi_i\}_{i=1}^d$ is an orthonormal basis of eigenvectors of $A$ with eigenvalues 
\[
0, 4\pi^2, 4\pi^2, 16\pi^2, 16\pi^2, \ldots
\]
if and only if $\psi_1\in\text{span}_{\mb{C}}(e_0)$ and $\psi_{2j}, \psi_{2j+1}\in\text{span}_{\mb{C}}\{e_{-j}, e_j\}$. for $j\geq 1$. If we further impose that $\psi_i = \hat\phi_i$ for some real $\phi_i\in\Lt$, we get that $\psi_1= \pm e_0$ and $\psi_{2j}, \psi_{2j+1}\in\text{span}_{\mb{R}}\{(e_j+e_{-j})/\sqrt{2}, (e_j-e_{-j})/\sqrt{2}\iota\}$. for $j\geq 1$. For $\phi_i\in\Lt$ with $\hat \phi_i = \psi_i$, it follows that $\phi_0 = \pm {\bf 1}$ and $\{\phi_{2j}, \phi_{2j+1}\}$ is an orthonormal basis for
\[
\text{span}\{\sqrt{2}\cos(2\pi j t),\: \sqrt{2}\sin(2\pi j t)\}.
\]
This shows that Theorem \ref{thm1} implies the desired form for some minimizers of the mean quadratic variation. Finally, if the singular values of $X$ are unique and non-zero, then these are the only minimizers by Theorem \ref{thm1}.
\end{proof}\\

\section{Optimal Spatial-Spectral Andrews plots}

We introduce the spatial-spectral quadratic variation and prove Theorem \ref{thm2} in this section. For a given parameter $\alpha>0$ and a dataset $\{x_i\}_{i=1}^n\subset\R^d$, we define the {\bf minimum mean spatial-spectral quadratic variation program} over $\Phi\in \mc{I}(\R^d, \mc{H})$ by
\begin{align}
\min_{\Phi\in \mc{I}(\R^d, \mc{H})} \frac{1}{n} \sum_{i=1}^n\left( \frac{\alpha}{4\pi^2} \Vert D \Phi(x_i)\Vert_{\Lt}^2 + \sum_{k\in\mb{Z}}\left\vert\widehat{\Phi(x_i)}[k+1]-\widehat{\Phi(x_i)}[k]\right\vert^2\right)
\end{align}
These second terms under the sum measure ``discrete'' quadratic variation in the Fourier domain. These terms can be represented in the spatial domain:
\begin{align*}
\sum_{k\in\mb{Z}}\left\vert\hat f[k+1]-\hat f[k]\right\vert^2&= \sum_{k\in\mb{Z}}(-\hat f[k-1]+2\hat f[k]- \hat f[k+1])\overline{\hat f[k]}\\
& = 2\int_0^1(1-\cos(2\pi t))\vert f(t)\vert^2\:dt.
\end{align*}
This form indicates that this term promotes functions that concentrate near $t=0$. We first show that this program fits in the framework provided by Theorem \ref{thm1} by considering the representation of the spatial-spectral quadratic variation in the Fourier coefficient domain. Lemma \ref{lemEvenOdd} establishes this representation, and ensures that the resulting operators are self-adjoint and discrete. 

To state and prove Lemma \ref{lemEvenOdd}, we need to establish some more notation. Letting $\{e_k\}_{k\in\mb{Z}}$ again denote the standard orthonormal basis on $\ell^2$, we let
\[
\mc{E} = \{e_0\}\cup\{(e_k+e_{-k})/\sqrt{2}\}_{k\geq 1}\text{ and } \mc{O}= \{(e_k-e_{-k})/\sqrt{2}\iota\}_{k\geq 1}
\]
and observe that $v\in\ell^2$ satisfying $v[-k]=\overline{v[k]}$ maps isometrically to the real vector space $\ell^2(\mb{N}_0)\oplus \ell^2(\mb{N})\cong \ell^2(\mb{Z})$ (where $\mb{N}_0=\{0\}\cup\mb{N}$) via $v\to (v_{\mc{E}}, v_{\mc{O}})$ where
\[
v_{\mc{E}} (\langle v,\: e_0\rangle, \langle v,\: (e_1+e_{-1})/\sqrt{2}\rangle,\ldots
\]
and
\[
 v_{\mc{O}}) = \langle v,\:(e_1-e_{-1})/\sqrt{2}\iota\rangle,\:\langle v,\: (e_2-e_{-2})/\sqrt{2}\iota\rangle ,\ldots.
\]
We call this isometry the {\bf even-odd isometry}. 

\begin{lemma}\label{lemEvenOdd}
For any $\alpha>0$ and $f\in\mc{H}$, we have
\begin{align*}
\frac{\alpha}{4\pi^2}\Vert D f \Vert_{\Lt}^2 + \sum_{k\in\mb{Z}}\left\vert\widehat{f}[k+1]-\widehat{f}[k]\right\vert^2& = \langle B_\alpha \hat f,\:\hat f\rangle\\
&= \langle B_\alpha^\mc{E}\hat f_\mc{E},\:\hat f_\mc{E}\rangle_{\ell^2(\mb{N}_0)}+\langle B_\alpha^\mc{O}\hat f_\mc{O},\:\hat f_\mc{O}\rangle_{\ell^2(\mb{N})}
\end{align*}
where $B_\alpha$ is an infinite tridiagonal array with diagonal entries $(B_\alpha)_{k,k}=\alpha k^2+2$ and $(B_\alpha)_{k+1,k} = (B_\alpha)_{k-1,k}=-1$ for all $k\in\mb{Z}$, and where
\[
B_\alpha^\mathcal{E} = \begin{pmatrix}
2 & -\sqrt{2} & 0 & 0 & \ldots \\
-\sqrt{2} & \alpha + 2 & -1 & 0 & \ldots\\
0 & -1 & 4\alpha + 2 & -1 & \ldots\\
0 & 0 & -1 & 9\alpha + 2 & \ldots\\
\vdots & \vdots & \vdots & \vdots & \ddots 
\end{pmatrix}
\]
and
\[
B_\alpha^\mathcal{O} = \begin{pmatrix}
 \alpha + 2 & -1 & 0 & \ldots\\
 -1 & 4\alpha + 2 & -1 & \ldots\\
 0 & -1 & 9\alpha + 2 & \ldots\\
 \vdots & \vdots & \vdots & \ddots 
\end{pmatrix}
\]
are Jacobi operators. Moreover, the operators $B_\alpha^{\mc{E}}$ and $B_\alpha^{\mc{O}}$ are self-adjoint and positive semi-definite, and $\sigma(B_\alpha^{\mc{E}})$ and $\sigma(B_\alpha^{\mc{O}})$ are both discrete.
\end{lemma}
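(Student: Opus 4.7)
The plan is to prove the first identity by direct computation in the Fourier coefficient domain using Plancherel's theorem, then obtain the block decomposition through the even-odd isometry, and finally treat $B_\alpha^{\mc{E}}$ and $B_\alpha^{\mc{O}}$ as bounded perturbations of a diagonal operator whose eigenvalues diverge. For the quadratic form identity, Plancherel gives $\Vert Df\Vert_{\Lt}^2 = 4\pi^2 \sum_{k \in \mb{Z}} k^2 |\hat f[k]|^2$, so the differential term contributes a diagonal quadratic form with entries $\alpha k^2$. Expanding the squared magnitude and reindexing the Fourier telescoping sum yields
\begin{align*}
\sum_{k \in \mb{Z}} |\hat f[k+1] - \hat f[k]|^2 = 2\sum_{k \in \mb{Z}} |\hat f[k]|^2 - \sum_{k \in \mb{Z}} \hat f[k+1]\overline{\hat f[k]} - \sum_{k \in \mb{Z}} \hat f[k]\overline{\hat f[k+1]},
\end{align*}
which assembles with the differential contribution into $\langle B_\alpha \hat f,\: \hat f \rangle$ for the tridiagonal $B_\alpha$ described in the statement.

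Next, because $f$ is real, $\hat f[-k] = \overline{\hat f[k]}$, so $\hat f$ lies in the subspace isometrically identified with $\ell^2(\mb{N}_0) \oplus \ell^2(\mb{N})$ via the even-odd isometry. The tridiagonal symbol of $B_\alpha$ is invariant under $k \mapsto -k$, so $B_\alpha$ commutes with the conjugate-reflection and preserves both the even and odd subspaces. To determine $B_\alpha^{\mc{E}}$ and $B_\alpha^{\mc{O}}$, I apply $B_\alpha$ to each basis vector in $\mc{E} \cup \mc{O}$ and read off the inner products. The lone boundary subtlety is $B_\alpha e_0 = -e_{-1} + 2 e_0 - e_1 = 2 e_0 - \sqrt{2}\cdot(e_1 + e_{-1})/\sqrt{2}$, which yields the $-\sqrt{2}$ off-diagonal in the first row of $B_\alpha^{\mc{E}}$; in the odd block the analogous contributions to $e_0$ cancel by antisymmetry, so $B_\alpha^{\mc{O}}$ begins with no boundary modification. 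For the remaining basis vectors, the $\pm k$ contributions combine to produce the stated $-1$ off-diagonals in both blocks.

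For the spectral properties, I write each of $B_\alpha^{\mc{E}}$ and $B_\alpha^{\mc{O}}$ as $A_0 + V$, where $A_0$ is the diagonal operator with entries $\alpha k^2 + 2$ on its natural maximal domain and $V$ is the bounded symmetric operator collecting the off-diagonal entries. Since $A_0$ is self-adjoint and $V$ is bounded and symmetric, $A_0 + V$ is self-adjoint on $\text{dom}(A_0)$. For discreteness of spectrum, $A_0$ has compact resolvent because its eigenvalues diverge; for $c$ large enough the identity $(A_0 + V + cI)^{-1} = (A_0 + cI)^{-1}(I + V(A_0 + cI)^{-1})^{-1}$ realizes the resolvent of $A_0 + V$ as a product of a compact and a bounded operator, hence compact. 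Positive semi-definiteness follows from the quadratic form identity: any real element of $\ell^2(\mb{N}_0) \oplus \ell^2(\mb{N})$ pulls back through the inverse even-odd isometry to conjugate-symmetric Fourier coefficients of some real $f \in \mc{H}$, on which the left-hand side of the identity is manifestly non-negative. The main obstacle I anticipate is the careful bookkeeping at the boundary entries of the even block, since a sign or normalization error in the projection onto $e_0$ would propagate throughout the rest of the block structure.
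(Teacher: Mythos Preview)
Your argument is correct. The computation of the quadratic forms and the positive semi-definiteness argument match the paper's proof, which simply declares the former a ``straightforward computation'' and deduces the latter from non-negativity of the left-hand side.

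The genuine difference is in how you establish self-adjointness and discreteness of the spectrum. The paper invokes a black-box criterion for Jacobi operators (from Petropoulou and Janas--Naboko): if the diagonal sequence $b_n$ diverges and $\limsup_n (a_n^2+a_{n-1}^2)/b_n^2 < 1/2$, then the Jacobi operator is self-adjoint with discrete spectrum; here $a_n=-1$ and $b_n=\alpha n^2+2$, so the hypothesis is immediate. Your route instead writes each operator as $A_0+V$ with $A_0$ diagonal (entries $\alpha k^2+2$) and $V$ bounded symmetric (the off-diagonals), then uses the Kato--Rellich theorem for self-adjointness and a resolvent-identity argument to show compactness of the resolvent is preserved under bounded perturbation. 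Your approach is more self-contained and transparent, requiring no literature on Jacobi matrices; the paper's citation, on the other hand, dispatches the matter in one line and would continue to apply even if the off-diagonals were unbounded (which is the regime that Jacobi-operator criterion is really designed for). One small imprecision: when you argue positive semi-definiteness, not every real element of $\ell^2(\mb{N}_0)\oplus\ell^2(\mb{N})$ pulls back to an $f\in\mc{H}$, only those in the form domain do; restricting to finitely supported vectors (a form core) and then passing to the closure fixes this, and the paper's own version of this step is no more careful.
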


\begin{proof}
The forms of $B_\alpha$, $B_\alpha^\mc{E}$, and $B_\alpha^\mc{O}$ follow by a straightforward computation. Now consider the note (following Theorem 4.5) in \cite{petropoulou2014self} about Theorem 4.1 in \cite{janas2001multithreshold}: for a Jacobi operator
\[
\begin{pmatrix}
b_1 & a_1 & 0 & 0 & \cdots\\
a_1 & b_2 & a_2 & 0 & \cdots\\
0 & a_2 & b_3 & a_3& \cdots\\
0 & 0 & a_3 & b_4& \cdots\\
\vdots & \vdots & \vdots &\vdots &\ddots
\end{pmatrix}
\]
defined using the real sequences $\{a_n\}_{n=1}^\infty$ and $\{b_n\}_{n=1}^\infty$, if $\vert b_n\vert$ diverges and 
\[
\limsup_{n\to\infty} \frac{a_n^2+a_{n-1}^2}{b_n^2}<\frac{1}{2},
\]
then the operator is self-adjoint with a discrete spectrum. With $a_k=-1$ and $b_k = \alpha k^2 + 2$, $b_k\to \infty$ and the limit supremum of the ratio is $0$. Therefore $B_\alpha^\mc{E}$ and $B_\alpha^{\mc{O}}$ are self-adjoint and each has a discrete spectrum. Positive semi-definiteness follows from the fact that the left-most quantity in the two equalities is always non-negative.
\end{proof}\\

We are now in position to state and prove Theorem \ref{thm2}. The proof relies on two technical lemmas that are proven in the remainder of this section. We let $B_\alpha^{(N)}$ denote an infinite truncated tridiagonal matrix with indices in $\mb{Z}$ such that
\[
B_{j,k} = \left\{\begin{array}{cl}
\alpha k^2 + 2 & j=k\text{ and } -N\leq k \leq N\\
-1 & \vert j-k\vert = 1 \text{ and } -N\leq j \leq N \text{ and } -N\leq k \leq N\\
0 & \text{otherwise}
\end{array}\right.
\]
For all $N\geq 1$, we let $B_\alpha^\mc{E}(N)$ and $B_\alpha^\mc{O}(N)$ denote the $N$ by $N$ leading principal minors of $B_\alpha^\mc{E}$ and $B_\alpha^\mc{O}$, respectively.

\begin{theorem}\label{thm2}
The solutions to the minimum mean spatial-spectral quadratic variation (MMSSQV) are limits of isometries of the form
\[
\Phi_N(x) = \sum_{i=1}^d (u_i^T x) \phi_i(N),
\]
where for each $N\geq d$ the collection $\left\{\widehat{ \phi_i(N)}\right\}_{i=1}^d$ is an orthonormal basis of eigenvectors corresponding to $d$ lowest (but non-zero) eigenvalues of the truncated operator $
B_\alpha^{(N)}$. Moreover, $MSSQV(\Phi_N)$ converges to the optimal value of the minimum spatial-spectral quadratic variation function as $N\to \infty$.
\end{theorem}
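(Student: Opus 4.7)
My plan is to recast MMSSQV into the framework of Theorem \ref{thm1} using Lemma \ref{lemEvenOdd}, identify the exact minimizers spectrally, and then approximate them via a finite-section spectral convergence argument for the truncated operator $B_\alpha^{(N)}$.

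First, by Plancherel's theorem and Lemma \ref{lemEvenOdd}, the MMSSQV functional rewrites as
\[
\frac{1}{n}\sum_{i=1}^n \langle B_\alpha \widehat{\Phi(x_i)},\,\widehat{\Phi(x_i)}\rangle_{\ell^2(\mb{Z})},
\]
and the even-odd isometry turns this into a direct-sum problem on $\ell^2(\mb{N}_0)\oplus \ell^2(\mb{N})$ driven by the Jacobi operators $B_\alpha^{\mc{E}}$ and $B_\alpha^{\mc{O}}$, which Lemma \ref{lemEvenOdd} certifies are self-adjoint, positive semi-definite, and have purely discrete spectrum. Theorem \ref{thm1} applied to $B_\alpha^{\mc{E}}\oplus B_\alpha^{\mc{O}}$ then produces the lower bound $\frac{1}{n}\sum_{i=1}^d \lambda_i(B_\alpha)\sigma_i(X)^2$ and an exact minimizer $\Phi^*(x) = \sum_{i=1}^d (u_i^Tx)\phi_i^*$ where $\{\widehat{\phi_i^*}\}_{i=1}^d$ is an orthonormal family of $B_\alpha$-eigenvectors (within the conjugate-symmetric real subspace, so that the $\phi_i^*$ are real-valued) corresponding to the $d$ lowest eigenvalues.

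Next, I would observe that $B_\alpha^{(N)}$ inherits the reflection symmetry $k\leftrightarrow -k$ from $B_\alpha$ and hence preserves the conjugate-symmetric real subspace; its non-zero spectrum decomposes through the even-odd isometry into the spectra of the finite Jacobi matrices $B_\alpha^{\mc{E}}(N+1)$ and $B_\alpha^{\mc{O}}(N)$. Standard finite-section convergence for self-adjoint Jacobi operators with diverging diagonals (via the min-max principle applied to the trial spaces spanned by $e_{-N},\ldots,e_N$) yields $\lambda_i(B_\alpha^{(N)})\to \lambda_i(B_\alpha)$ as $N\to\infty$, together with $\ell^2$-convergence (up to subsequences and basis choices within eigenspaces) of the corresponding eigenvectors. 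Choosing $\widehat{\phi_i(N)}$ as such eigenvectors, the trace identity in the proof of Theorem \ref{thm1} gives $MSSQV(\Phi_N) = \frac{1}{n}\sum_{i=1}^d \lambda_i(B_\alpha^{(N)})\sigma_i(X)^2$, which converges to the optimal value immediately from the eigenvalue convergence.

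The main obstacle is promoting $\ell^2$-convergence of the eigenvectors to $\mc{H}$-convergence of the $\phi_i(N)$, which is what is needed to interpret ``limits of isometries'' in a topology that controls the derivative term in MSSQV. The key input is the coercive bound $\langle B_\alpha v,\,v\rangle \geq \alpha \sum_k k^2 |v[k]|^2$, which forces the high-frequency tails of any eigenvector of $B_\alpha^{(N)}$ associated with a bounded eigenvalue to decay uniformly in $N$: more precisely, if $\widehat{\phi_i(N)}$ has eigenvalue $\lambda$, then $\sum_{|k|>M} k^2|\widehat{\phi_i(N)}[k]|^2 \leq \lambda/\alpha$, giving tightness in the $k^2$-weighted norm. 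Combined with the $\ell^2$-convergence, this upgrades to $\mc{H}$-convergence and hence to $\Phi_N(x)\to \Phi^*(x)$ in $\mc{H}$ for every $x\in \R^d$, completing the theorem.
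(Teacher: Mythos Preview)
Your outline is essentially the paper's own strategy: recast MMSSQV through Lemma~\ref{lemEvenOdd} into the framework of Theorem~\ref{thm1}, pass to the even--odd block decomposition, and then approximate the infinite Jacobi eigenproblem by its leading principal minors. Where you invoke ``standard finite-section convergence for self-adjoint Jacobi operators with diverging diagonals'', the paper does this work explicitly: Lemma~\ref{lemDiscSpec} proves $\lambda_k(B_\alpha^{\mc{O}}(N))\downarrow\lambda_k(B_\alpha^{\mc{O}})$ via two interleaved Cauchy--interlacing arguments (for $Q_N$ and the perturbed $\tilde Q_N$) together with an appeal to \cite{petropoulou2014self}, and Lemma~\ref{lemEigVec} establishes $\ell^2$-convergence of the padded finite eigenvectors by deriving the quantitative endpoint bound $|e_N^T v_k(N)|=O(N^{-4})$ and summing the resulting telescoping estimates. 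Note in particular that your parenthetical ``via the min--max principle'' only gives $\lambda_k(B_\alpha^{(N)})\geq\lambda_k(B_\alpha)$; the matching upper bound is exactly what Lemma~\ref{lemDiscSpec} supplies.

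Your final paragraph goes beyond what the paper actually proves---the paper settles for $\ell^2$-convergence of eigenvectors and reads ``limits of isometries'' in that topology---and your attempted upgrade to $\mc{H}$-convergence has a gap. The coercive inequality $\langle B_\alpha v,v\rangle\geq \alpha\sum_k k^2|v[k]|^2$ bounds the \emph{full} weighted sum by $\lambda/\alpha$, so it gives uniform $\mc{H}$-boundedness of the $\widehat{\phi_i(N)}$, not tightness: the tail $\sum_{|k|>M}k^2|\widehat{\phi_i(N)}[k]|^2$ is merely bounded, not uniformly small in $N$ as $M\to\infty$. Uniform $\mc{H}$-boundedness together with $\ell^2$-convergence does not imply $\mc{H}$-convergence. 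To close this you would need a higher-order bound (e.g.\ on $\sum_k k^4|\widehat{\phi_i(N)}[k]|^2$, obtainable from $\|B_\alpha\widehat{\phi_i(N)}\|$), which in turn hinges on the endpoint decay $|e_N^T v_k(N)|\to 0$ that the paper establishes in Lemma~\ref{lemEigVec}.
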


\begin{proof}
By Lemma \ref{lemEvenOdd}, the framework of Theorem \ref{thm1} applies for $B_\alpha$. On the other hand, Lemma \ref{lemEvenOdd} also applies to the problem using the block-diagonal operator
\[
\tilde B_\delta = \begin{pmatrix}
B_\delta^\mc{E} & 0\\
0 & B_\delta^\mc{O}
\end{pmatrix}.
\]
Observing that the non-trivial eigenvectors for $B_\alpha^{(N)}$ are identifiable with the eigenvectors of the block-diagonal matrix
\[
\begin{pmatrix}
B_\delta^\mc{E}(N) & 0\\
0 & B_\delta^\mc{O}(N)
\end{pmatrix}
\]
it suffices to show that solving the problem in this truncated system approximates the solution in the infinite-dimensional case. Lemma \ref{lemEigVec} establishes that any of the ``lower'' $d$ eigenvectors of this truncated block-diagonal matrix converges to an eigenvector of the infinite-dimensional system by padding with zeros. Additionally, Lemma \ref{lemDiscSpec} ensures that the quantities
\[
\langle B_\delta^\mc{O}(N) \psi_k(N), \psi_k(N)\rangle \to \lambda_k(B_\delta^{\mc{O}})
\]
as $N\to \infty$ where $\psi_k(N)$ is a unit eigenvector of $B_\delta^{\mc{O}}(N)$ with eigenvalue $\lambda_k(B_\delta^{\mc{O}}(N))$. A similar result holds for $B_\delta^{\mc{E}}(N)$. Therefore, we can also ensure that the value of the mean spatial-spectral quadratic variation is attained for $\Phi_N$ as $N\to\infty$.
\end{proof}\\

The rest of this section is used to establish Lemmas \ref{lemDiscSpec} and \ref{lemEigVec}.

\begin{lemma}\label{lemDiscSpec}
The spectrum of $B_\alpha^\mc{E}$ and the spectrum of $B_\alpha^\mc{O}$ are both purely discrete with multiplicity $1$ for all eigenvalues, and $\sigma(B_\alpha^\mc{E})$ and the spectrum of $\sigma(B_\alpha^\mc{O})$ are disjoint from each other. Moreover, for any fixed $k\geq 1$, $\lambda_k(B_\alpha^\mc{E}(N))\downarrow  \lambda_k(B_\alpha^\mc{E})$ and $\lambda_k(B_\alpha^\mc{O}(N))\downarrow  \lambda_k(B_\alpha^\mc{O})$.
\end{lemma}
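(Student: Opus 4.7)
The plan is to prove simplicity, disjointness, and monotone convergence in sequence, combining elementary recurrence analysis with standard min-max techniques. Discreteness of each spectrum was established in Lemma \ref{lemEvenOdd}, so simplicity reduces to showing that eigenspaces are at most one-dimensional. For $B_\alpha^\mc{O}\phi = \lambda\phi$ with $\phi=(\phi_1,\phi_2,\ldots)$, the convention $\phi_0 = 0$ casts the eigenvalue equations in the standard three-term form $-\phi_{k-1} + (\alpha k^2 + 2)\phi_k - \phi_{k+1} = \lambda\phi_k$ for all $k \geq 1$, so $\phi_1$ determines the whole sequence and the eigenspace is at most one-dimensional. For $B_\alpha^\mc{E}$, row $0$ gives the relation $(2-\lambda)\psi_0 = \sqrt{2}\psi_1$, and the recurrence for $k \geq 1$ determines everything from $\psi_0$, again yielding at most one-dimensional eigenspaces.

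For disjointness, suppose $\lambda \in \sigma(B_\alpha^\mc{E})\cap\sigma(B_\alpha^\mc{O})$ with eigenvectors $\psi$ and $\phi$. After the rescaling $\tilde\psi_0 := \sqrt{2}\psi_0$ and $\tilde\psi_k := \psi_k$ for $k\geq 1$, row $1$ of $B_\alpha^\mc{E}$ becomes $-\tilde\psi_0 + (\alpha+2)\tilde\psi_1 - \tilde\psi_2 = \lambda\tilde\psi_1$, so both $\tilde\psi$ and $\phi$ satisfy the identical three-term recurrence $-v_{k-1} + (\alpha k^2 + 2)v_k - v_{k+1} = \lambda v_k$ for all $k \geq 1$. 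The discrete Wronskian $W_k := \tilde\psi_k\phi_{k-1} - \tilde\psi_{k-1}\phi_k$ satisfies $W_{k+1} = W_k$ by direct substitution; since $\tilde\psi,\phi \in \ell^2$, both sequences decay and $W_k \to 0$, forcing $W_k \equiv 0$ and hence $\tilde\psi_k = c\phi_k$ for some constant $c$ and all $k \geq 1$. Extending the proportionality to $k = 0$ via the recurrence at $k = 1$, we obtain $\tilde\psi_0 = c\phi_0 = 0$. Row $0$ of $B_\alpha^\mc{E}$, which reads $(2-\lambda)\tilde\psi_0 = 2\tilde\psi_1$, then forces $\tilde\psi_1 = 0$; iterating the recurrence from this seed gives $\tilde\psi \equiv 0$ if $c = 0$ or $\phi \equiv 0$ if $c \neq 0$, contradicting the nontriviality of the eigenvectors.

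For the monotone convergence $\lambda_k(B_\alpha^\mc{E}(N)) \downarrow \lambda_k(B_\alpha^\mc{E})$, the plan is to combine Cauchy interlacing with the Courant-Fischer min-max principle. Because $B_\alpha^\mc{E}(N)$ is a principal submatrix of $B_\alpha^\mc{E}(N+1)$, interlacing yields $\lambda_k(B_\alpha^\mc{E}(N+1)) \leq \lambda_k(B_\alpha^\mc{E}(N))$, establishing monotonic decrease. The lower bound $\lambda_k(B_\alpha^\mc{E}(N)) \geq \lambda_k(B_\alpha^\mc{E})$ follows by noting that any $k$-dimensional trial subspace of $\mb{C}^N$ embeds into $\text{dom}(B_\alpha^\mc{E})$ by zero-padding with identical Rayleigh quotients. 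For the matching upper bound, truncate the orthonormal eigenvectors $\psi_1,\ldots,\psi_k$ of $B_\alpha^\mc{E}$ to their first $N$ coordinates to form $\psi_j^{(N)}$; a direct computation shows $B_\alpha^\mc{E}\psi_j^{(N)} - \lambda_j\psi_j^{(N)}$ has nonzero entries only at positions $N$ and $N+1$, each involving a single coordinate of $\psi_j$ that vanishes as $N \to \infty$ by $\ell^2$-summability. Consequently $\langle B_\alpha^\mc{E}(N)\psi_j^{(N)}, \psi_l^{(N)}\rangle \to \lambda_j\delta_{jl}$, so the max Rayleigh quotient on $\text{span}\{\psi_j^{(N)}\}_{j=1}^k$ tends to $\lambda_k$ and $\limsup_N \lambda_k(B_\alpha^\mc{E}(N)) \leq \lambda_k$. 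The identical argument handles $B_\alpha^\mc{O}$. The main obstacle is the disjointness step: tracking the asymmetric first row of $B_\alpha^\mc{E}$ through the $\sqrt{2}$-rescaling and using $\ell^2$-decay to kill the Wronskian at infinity both require care, whereas simplicity and monotone convergence are routine once the recurrence structure and $\ell^2$-decay of eigenvectors are in hand.
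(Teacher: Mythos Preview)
Your proof is correct and complete, but it diverges from the paper's route in two places worth noting.

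For the monotone convergence $\lambda_k(B_\alpha^{\mc{O}}(N))\downarrow\lambda_k(B_\alpha^{\mc{O}})$, the paper does not use Courant--Fischer at all. Instead it introduces an auxiliary family $\tilde Q_N=Q_N-e_Ne_N^T$, shows by rank-one interlacing that $\lambda_k(\tilde Q_N)<\lambda_k(Q_N)<\lambda_{k+1}(\tilde Q_N)$, proves $\lambda_k(\tilde Q_N)$ is monotone \emph{increasing} via a different rank-one decomposition, and then squeezes the limits to obtain $\lambda_k<\lambda_{k+1}$. The identification of the limit set $\{\lambda_k\}$ with $\sigma(B_\alpha^{\mc{O}})$ is then imported from an external result (Theorem~2.4 of \cite{petropoulou2014self}). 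Your min--max argument is more direct and entirely self-contained modulo Lemma~\ref{lemEvenOdd}: the lower bound by zero-padding trial spaces and the upper bound by truncating the true eigenvectors dispense with the auxiliary $\tilde Q_N$ and the external citation altogether. (One cosmetic slip: the residual $B_\alpha^{\mc{E}}\psi_j^{(N)}-\lambda_j\psi_j^{(N)}$ is supported at positions $N-1$ and $N$, not $N$ and $N+1$; the substance is unaffected.)

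For disjointness, the paper exploits the block relation $B_\alpha^{\mc{E}}=\begin{pmatrix}2 & -\sqrt{2}\,e_1^T\\ -\sqrt{2}\,e_1 & B_\alpha^{\mc{O}}\end{pmatrix}$ and an inner-product identity between putative common eigenvectors, whereas you run the constant-Wronskian argument after the $\sqrt{2}$-rescaling and use $\ell^2$-decay to force the Wronskian to zero. Both are short; yours is the more classical orthogonal-polynomial idiom. Simplicity is handled identically in both proofs via the three-term recurrence.
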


\begin{proof}
We first show that $B_\alpha^\mc{O}$ has purely discrete spectrum and that every eigenvalue of $B_\alpha^\mc{O}$ has multiplicity 1. Along the way, we will show that $\lambda_k(B_\alpha^\mc{O}(N))$ (defined for all $N\geq k$) is a monotone decreasing sequence converging to $\lambda_k(B_\alpha^\mc{O})$. The same reasoning applies for $B_\alpha^\mc{E}$, so we omit this part of the argument. However, at the end of the proof we demonstrate that $\sigma(B_\alpha^\mc{E})$ and $\sigma(B_\alpha^\mc{O})$ are necessarily disjoint.

To simplify notation, set $Q_N = B_\alpha^\mc{O}(N)$. Observe that $Q_{N+1}$ is a bordered matrix with leading principal $N$ by $N$ submatrix $Q_N$:
\[
Q_{N+1} = \begin{pmatrix}
Q_N & -e_N\\
-e_N^T & \alpha(N+1)^2+2
\end{pmatrix}
\]
where $e_N\in\R^N$ is the $N$th standard orthonormal basis member of $\R^N$ (that is, $0$ for all entries except the $N$th entry which is $1$). Because of this relationship, Cauchy interlacing yields 
\[
\lambda_1(Q_{N+1}) \leq \lambda_1(Q_N)\leq \lambda_2(Q_{N+1}) \leq \lambda_2(Q_N)\leq \cdots\leq \lambda_N(Q_N)\leq \lambda_{N+1}(Q_{N+1}).
\]
It follows that, for $N\geq k$, $\lambda_k(Q_N)$ is a monotone decreasing sequence. On the other hand, if we define
\[
\tilde Q_N = Q_N - e_Ne_N^T,
\]
we have that 
\[
\tilde Q_{N+1} = \begin{pmatrix}
Q_N & -e_N\\
-e_N^T & \alpha(N+1)^2+1
\end{pmatrix}
\]
and hence 
\[
\lambda_1(\tilde Q_N)\leq\lambda_1(Q_N)\leq \lambda_2(\tilde Q_N)\leq\lambda_2(Q_N)\leq\cdots \leq \lambda_N(\tilde Q_N)\leq\lambda_N(Q_N)
\] 
follows from Cauchy interlacing for rank-one perturbations.  But also observe that if $v$ and $\tilde v$ are unit eigenvectors of $Q_N$ and $\tilde Q_N$ with eigenvalues $\lambda = \lambda_k(Q_N)$ and $\tilde\lambda = \lambda_k(\tilde Q_N)$, then
\[
\lambda v^T\tilde v = \tilde\lambda v^T \tilde v - (e_N^T v)(e_N^T \tilde v)
\]
so
\[
(\lambda - \tilde\lambda) v^T v = - (e_N^T v)(e_N^T \tilde v).
\]
By way of contradiction, suppose $\lambda=\tilde\lambda$. Then either $e_N^Tv=0$ or $e_N^T \tilde v=0$, but then backsolving one of the equations
\[
(Q_N-\lambda I)v=0\text{ or } (\tilde Q_N-\tilde \lambda I)\tilde v
\]
will yield $v=0$ or $\tilde v=0$, contradicting the unit-norm condition. We conclude that $\lambda_k(\tilde Q_N)<\lambda_k(Q_N)<\lambda_{k+1}(\tilde Q_N)$ for $N\geq k+1$.

On the other hand, we have that
\[
\tilde Q_{N+1} = \begin{pmatrix}
\tilde Q_N & 0\\
0 & \alpha(N+1)^2
\end{pmatrix} + (e_N-e_{N+1})(e_N-e_{N+1})^T,
\]
where in this context $e_N, e_{N+1}\in\R^{N+1}$ are the usual orthonormal basis members. This yields the interlacing conditions
\[
\lambda_1(\tilde Q_N) \leq \lambda_1(\tilde Q_{N+1}) \leq \lambda_2(\tilde Q_{N}) \leq\cdots \leq \lambda_N(\tilde Q_{N+1})\leq \alpha(N+1)^2 \leq \lambda_{N+1}(\tilde Q_{N+1})
\]
and hence $\lambda_k(\tilde{Q}_N)$ is a monotone increasing sequence for $k\geq N$.

Collecting our observations, for arbitrary $k\geq 1$,
\[
\lambda_k(Q_N) < \lambda_{k+1}(\tilde Q_N) < \lambda_{k+1}(Q_N)
\]
for all $N\geq k+1$. Monotonicity allows us to set $\lambda_k = \lim_{N\to\infty}\lambda_k(Q_N)$, $\tilde\lambda_{k+1} = \lim_{N\to\infty}\lambda_{k+1}(\tilde Q_N)$ and $\lambda_{k+1} = \lim_{N\to\infty}\lambda_{k+1}(Q_N)$. Since $\lambda_k(Q_N)$ decreases to $\lambda_k$, we get
\[
\lambda_k < \lambda_{k+1}(\tilde Q_N).
\]
Since $\lambda_{k+1}(\tilde Q_N)$ increases to $\tilde \lambda_{k+1}$, we have
\[
\lambda_k < \tilde\lambda_{k+1}.
\]
Finally, $\lambda_{k+1}(\tilde Q_N) < \lambda_{k+1}(Q_N)$ for all $N\geq k+1$ implies $\tilde\lambda_{k+1}\leq \lambda_{k+1}$, and the transitive property yields $\lambda_k<\lambda_{k+1}$ for all $k\geq 1$. On the other hand, the interlacing condition $\alpha k^2 \leq \lambda_k(\tilde Q_k)$ (seen above as $\alpha(N+1)^2 \leq \lambda_{N+1}(\tilde{Q}_{N+1})$) implies $\alpha k^2 \leq \tilde\lambda_k\leq \lambda_k$ for $k\geq 2$. 

Now, set
\[
\Lambda = \left\{\lambda: \lambda=\lim_{N\to\infty} \lambda^{(N)}\text{ where } \lambda^{(N)}\in\sigma(Q_N)\right\}.
\]
That is, $\Lambda$ is the set of limits of eigenvalues of $Q_N$ as $N\to\infty$. We will show that $\Lambda= \{\lambda_1, \lambda_2, \ldots\}$ where $\lambda_k$ are defined above. Let $\lambda^{(N)} = \lambda_{k_N}(Q_N)$ and suppose $\lambda = \lim_{N\to\infty}\lambda^{(N)}$. First, it must be the case that $k_N$ is bounded. By way of contradiction, suppose there is a subsequence $k_{N_j}$ such that $k_{N_j}\to\infty$ as $j\to\infty$. Then 
\[
\alpha (k_{N_j})^2 \leq \lambda_{k_{N_j}}(Q_{N_j})
\]
yields that the subsequence $\lambda_{k_{N_j}}(Q_{N_j})$ diverges as $j\to\infty$, a contradiction to the fact that $ \lambda_{k_N}(Q_N)$ converges to $\lambda$. 

Next, we claim that $k_N$ is eventually constant. By the pigeonhole principle, since $k_N$ is bounded, there is a $k_1$ such that $k_N=k_1$ for infinitely many $N$. By way of contradiction, if there is a $k_2\not=k_1$ such that $K_N=k_2$ for infinitely many $N$, then there are subsequences $k_{N_{1,j}}$ and $k_{N_{2,j}}$ which are eventually $k_1 \not= k_2$ (respectively), then
\[
\lambda_{k_{N_{1,j}}}(Q_{N_{1,j}})\to \lambda_{k_1}
\]
and 
 \[
\lambda_{k_{N_{2,j}}}(Q_{N_{2,j}})\to \lambda_{k_2}.
\]
But by the above, $k_1<k_2$ then $\lambda_{k_1}<\lambda_{k_2}$, and if $k_2<k_1$ $\lambda_{k_1}<\lambda_{k_2}$. Either way, we contradict he fact that subsequences of a convergent sequence are convergent to the same limit. We therefore conclude $k_1$ is the only $k$ between $1$ and $\sup k_N<\infty$ for which $k_N=k_1$ infinitely many times. Thus, there is a finite index $M$ such that $k_N=k_1$ for all $N\geq M$. Consequently, $\lambda_{k_1}=\lambda$. 

Theorem 2.4 of \cite{petropoulou2014self} indicates that $\Lambda = \sigma(B_\alpha^{\mc{O}})$ since the diagonal entries of $B_\alpha^{\mc{O}}$ diverge and $B_\alpha^{\mc{O}}$ also satisfies condition (2.2) of that paper. We conclude that $B_\alpha^{\mc{O}}$ has a discrete spectrum. To prove that $\lambda_k=\lambda_k(B_\alpha^{\mc{O}})$, suppose $B_\alpha^{\mc{O}} v = \lambda_k v$. If $v[1]=0$, then we obtain a contradiction because we inductively conclude $v[k]=0$ for all $k\in\mb{N}$. If $v[1]\not=0$, replace $v$ with $v[1]^{-1} v$ so that $v[1]=1$. Then solving the system inductively necessarily yields unique values. Therefore the dimension of the kernel of $B_\alpha^{\mc{O}}$ is $1$. 

Finally, we observe that 
\[
B_\alpha^{\mc{E}} = \begin{pmatrix} 
2 & -\sqrt{2} e_2^T\\
-\sqrt{2}e_2^T & B_\alpha^{\mc{O}}
\end{pmatrix}
\]
So given two unit eigenvectors, we have
\[
\lambda v^T u = \mu v^T u -\sqrt{2}(e_1^T v)(e_2^T u).
\]
Then $\lambda=\mu$ would imply either $e_1^T v=0$ or $e_2^Tu=0$ which would lead to either $v=0$ or $u=0$ by induction. This contradicts the the fact that $v$ and $u$ were chosen to be unit vectors. 
\end{proof}\\

The next lemma ensures that we can approximate the eigenvectors of $B_\alpha^\mc{E}$ and $B_\alpha^\mc{O}$ using the eigenvectors of $B_\alpha^\mc{E}(N)$ and $B_\alpha^\mc{O}(N)$, and therefore we obtain an optimal isometry. 

\begin{lemma}\label{lemEigVec}
For any fixed $k\geq 1$, given any sequence of unit vectors $\{v_N\}_{N=1}^\infty$ such that $v_N\in\R^N$ is an eigenvector of $B_\alpha^{\mc{O}}(N)$, there is a choice of signs $\xi_N\in\{-1,1\}$ such that the padded unit vectors
\[
\tilde v_N = \xi_N\begin{pmatrix}
v_N\\
0\\
0\\
\vdots
\end{pmatrix}\in\ell^2(\mb{N})
\]
converge to a unit vector $v\in\ell^2(\mb{N})$ which is an eigenvector for $B_\alpha^{\mc{O}}$ with eigenvalue $\lambda_k(B_\alpha^{\mc{O}})$. A similar result holds for $B_\alpha^{\mc{E}}$. 
\end{lemma}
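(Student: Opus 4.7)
The plan is to pad each $v_N$ with zeros to obtain unit vectors $\tilde v_N\in\ell^2(\mb{N})$, extract a weakly convergent subsequence via Banach--Alaoglu, upgrade weak to strong convergence with a tightness argument driven by the quadratic growth of the diagonal of $B_\alpha^{\mc{O}}$, identify the strong limit as an eigenvector of $B_\alpha^{\mc{O}}$ with eigenvalue $\lambda_k(B_\alpha^{\mc{O}})$ by invoking closedness, and finally exploit the one-dimensionality of the eigenspace (Lemma \ref{lemDiscSpec}) to choose signs that make the entire sequence converge.

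First I would record the near-eigenvector identity that comes for free from the truncation: since $B_\alpha^{\mc{O}}(N)$ is the upper-left $N\times N$ principal submatrix of the tridiagonal operator $B_\alpha^{\mc{O}}$, applying $B_\alpha^{\mc{O}}$ to $\tilde v_N$ agrees with $\lambda_k(B_\alpha^{\mc{O}}(N))\tilde v_N$ everywhere except at coordinate $N+1$, where the value is $-v_N[N]$; hence $\|B_\alpha^{\mc{O}}\tilde v_N - \lambda_k(B_\alpha^{\mc{O}}(N))\tilde v_N\|_{\ell^2} = |v_N[N]|$. Next, AM-GM on the off-diagonal cross terms gives $\langle B_\alpha^{\mc{O}}\tilde v_N,\tilde v_N\rangle \geq \sum_n \alpha n^2 |\tilde v_N[n]|^2$. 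The left-hand side equals $\lambda_k(B_\alpha^{\mc{O}}(N))$, which the monotone convergence statement in Lemma \ref{lemDiscSpec} bounds above by $\lambda_k(B_\alpha^{\mc{O}}(k))$; this produces a uniform tail estimate $\sum_{n>M}|\tilde v_N[n]|^2 \leq C/(\alpha M^2)$ with the free byproduct $|v_N[N]|\to 0$.

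With the tail bound in hand, I would extract a weakly convergent subsequence $\tilde v_{N_j}\to v$, and the uniform tightness upgrades this to strong convergence in $\ell^2$, so $\|v\|=1$. Combining the near-eigenvector identity, the convergence $\lambda_k(B_\alpha^{\mc{O}}(N_j))\to\lambda_k(B_\alpha^{\mc{O}})$ from Lemma \ref{lemDiscSpec}, and the strong convergence of $\tilde v_{N_j}$, a triangle-inequality chase yields $B_\alpha^{\mc{O}}\tilde v_{N_j}\to \lambda_k(B_\alpha^{\mc{O}})v$ strongly; closedness of the self-adjoint operator $B_\alpha^{\mc{O}}$ then places $v$ in $\text{dom}(B_\alpha^{\mc{O}})$ with $B_\alpha^{\mc{O}} v = \lambda_k(B_\alpha^{\mc{O}})v$. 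Since Lemma \ref{lemDiscSpec} makes this eigenspace one-dimensional, every subsequential strong limit of $\{\tilde v_N\}$ equals $\pm v_0$ for a fixed unit eigenvector $v_0$; choosing $\xi_N\in\{-1,+1\}$ so that $\xi_N\langle\tilde v_N, v_0\rangle\geq 0$ forces $\langle\xi_N\tilde v_N, v_0\rangle\to 1$, whence $\|\xi_N\tilde v_N - v_0\|^2 = 2-2\langle\xi_N\tilde v_N, v_0\rangle\to 0$ along the full sequence. The $B_\alpha^{\mc{E}}$ case is handled identically.

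The step I expect to be the main obstacle is the upgrade from weak to strong convergence: because $B_\alpha^{\mc{O}}$ is unbounded, a weak limit could in principle lose mass at infinity, leaving $\|v\|<1$ and collapsing the rest of the argument. The tightness estimate extracted from the quadratic form is exactly what blocks this escape of mass, and it is the only place where the structural fact that the diagonal entries grow like $\alpha n^2$ is essential; everything else is routine spectral perturbation and Hilbert-space bookkeeping.
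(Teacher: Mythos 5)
Your argument is correct, but it follows a genuinely different route from the paper. The paper's proof is quantitative: it backsolves the eigenvector recurrence to show the last entry of $v_k(N)$ decays like $\mc{O}(N^{-4})$, uses the spectral gap $\delta$ guaranteed by Lemma \ref{lemDiscSpec} to bound the overlap of $\tilde v_k(N)$ with the other eigenvectors of $Q_{N+1}$, and concludes that $\Vert \xi_N v_k(N+1)-\tilde v_k(N)\Vert = \mc{O}(N^{-7/2})$ is summable, so the padded sequence is Cauchy and converges; closedness then identifies the limit as an eigenvector, exactly as in your last step. Your proof replaces the rate computation and gap estimate by a soft compactness argument: the identity $\langle B_\alpha^{\mc{O}}\tilde v_N,\tilde v_N\rangle=\lambda_k(Q_N)$ together with the AM--GM lower bound $\langle B_\alpha^{\mc{O}}v,v\rangle\geq\sum_n \alpha n^2\vert v[n]\vert^2$ and the monotone bound $\lambda_k(Q_N)\leq\lambda_k(Q_k)$ from Lemma \ref{lemDiscSpec} gives a uniform tail estimate, which both upgrades weak subsequential limits to strong ones (preventing loss of mass and giving $\Vert v\Vert=1$) and yields $\vert v_N[N]\vert\to 0$, which is all you need for the near-eigenvector identity since you never require summability of these errors. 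Multiplicity one from Lemma \ref{lemDiscSpec} then pins every subsequential limit to $\pm v_0$, and your sign-correction argument upgrades subsequential to full-sequence convergence. What each approach buys: the paper's Cauchy-sequence argument produces explicit convergence rates (useful if one wants quantitative error control for the numerical approximations in Theorem \ref{thm2}), while yours is shorter, avoids the entrywise backsolving and the explicit use of the gap $\delta$, and isolates the single structural input (quadratic growth of the diagonal) that makes the truncation scheme work; the price is that it is purely qualitative.
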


\begin{proof}
Set $Q_N=B_\alpha^{\mc{O}}(N)$ to simplify notation, and let $\{v_j(N)\}_{j=1}^N$ denote an orthonormal basis of eigenvectors of $Q_N$ associated with the non-decreasing eigenvalues $\{\lambda_j(Q_N)\}_{j=1}^N$ in a respective fashion. By Lemma \ref{lemDiscSpec}, for any $k\geq 1$, $\lambda_{k}(Q_N)\downarrow \lambda_{k}(B_\alpha^{\mc{O}})=\lambda_k$ and $\lambda_{k+1}(Q_N)\downarrow \lambda_{k+1}(B_\alpha^{\mc{O}})=\lambda_{k+1}$, and $\lambda_k\not=\lambda_{k+1}$. Therefore, there is a $\delta>0$ and an $M\in\mb{N}$ such that
\[
\delta < \lambda_{k+1} - \lambda_{k}(Q_N) < \lambda_{k+1}(Q_N) - \lambda_{k}(Q_N)
\]
for all $N\geq M$. By taking a minimum over $\delta$'s and a maximum over $M$'s, we can obtain a $\delta>0$ and an $M\in\mb{N}$ such that
\[
\delta < \lambda_{k+1} - \lambda_{k}(Q_N) < \lambda_{k+1}(Q_N) - \lambda_{k}(Q_N)
\]
and
\[
\delta <  \vert \lambda_{k}(Q_N) - \lambda_j\vert  < \vert \lambda_{k}(Q_N) - \lambda_{j}(Q_N)\vert
\]
for all $j<k$. 

We note that 
\[
(\alpha N^2+2-\lambda_k(Q_N))(e_{N}^Tv_k(N))-(e_{N-1}^Tv_k(N)) =0
\]
so 
\[
e_{N}^Tv_k(N) = \frac{e_{N-1}^Tv_k(N)}{\alpha N^2+2-\lambda_k(Q_N)}
\]
and
\[
-(e_{N-2}^Tv_k(N))+(\alpha (N-1)^2+2-\lambda_k(Q_N))(e_{N-1}^Tv_k(N))-(e_{N}^Tv_k(N)) =0.
\]
Then 
\[
e_{N-1}^Tv_k(N) = \frac{(e_{N-2}^Tv_k(N))+(e_{N}^Tv_k(N))}{\alpha (N-1)^2+2-\lambda_k(Q_N)}
\]
and combining these we have
\[
\vert e_{N}^Tv_k(N)\vert \leq \frac{2}{(\alpha N^2+2-\lambda_k(Q_N))(\alpha (N-1)^2+2-\lambda_k(Q_N))}
\]
Let $M$ be such than $N\geq M$ implies $\lambda_k(Q_N)-\lambda_k < 2$ and $\alpha(N-1)^2> \lambda_k$. We have the bound
\[
\vert e_{N}^Tv_k(N)\vert \leq \frac{2}{(\alpha N^2-\lambda_k)(\alpha (N-1)^2-\lambda_k)} \leq \frac{2}{(\alpha (N-1)^2-\lambda_k)^2} 
\]

Now,
\begin{align*}
\lambda_j(Q_{N+1}) v_j(N+1)^T \tilde v_k &= v_j(N+1) Q_{N+1} \tilde v_k\\
&= \lambda_k(Q_{N}) v_j(N+1)^T \tilde v_k - (e_N^T \tilde v_k)(e_{N+1}^T v_j(N+1)),
\end{align*}
so we have
\[
v_j(N+1)^T \tilde v_k = -\frac{(e_N^T \tilde v_k)(e_{N+1}^T v_j(N+1))}{\lambda_j(Q_{N+1})-\lambda_k(Q_{N})}
\]
and therefore
\[
\vert v_j(N+1)^T \tilde v_k\vert^2 \leq \frac{(e_N^T \tilde v_k)^2}{\delta^2}.
\]
Hence,
\[
\vert v_k(N+1)^T \tilde v_k(N)\vert^2 = 1 - \sum_{j\not=k} \vert v_j(N+1)^T \tilde v_k\vert^2 \geq 1 - \vert e_N^T \tilde v_k\vert^2\sum_{j\not=k} \frac{1}{\delta^2} = 1 - \frac{N}{\delta^2}\vert e_N^T \tilde v_k\vert^2.
\]
Therefore, letting $\xi_N=\text{sign}(v_k(N+1)^Tv_k(N))$,
\begin{align*}
\Vert \xi_Nv_k(N+1) - \tilde v_k(N)\Vert^2 &= 2 - 2\vert v_k(N+1)^Tv_k(N)\vert\\
& < 2\left (1-\sqrt{1 - \frac{N}{\delta^2}\vert e_N^T \tilde v_k\vert^2}\right)\\
&= \frac{2N\vert e_N^T \tilde v_k\vert^2}{\delta^2\left (1+\sqrt{1 - \frac{N}{\delta^2}\vert e_N^T \tilde v_k\vert^2}\right)}
\end{align*}
and hence
\[
\Vert \xi_Nv_k(N+1) - \tilde v_k(N)\Vert\leq \frac{\sqrt{2N}\vert e_N^T \tilde v_k(N)\vert}{\delta}. \leq \frac{2\sqrt{2}}{\delta} \frac{\sqrt{N}}{(\alpha(N-1)^2-\lambda_k)^2}.
\]
Observe that the terms on the left are $\mc{O}(N^{-7/2})$, so by the comparison test and convergence of $p$-series, we have that
\[
\sum_{N=M}^\infty \Vert \xi_{N+1}\tilde v_k(N+1) - \xi_N\tilde v_k(N)\Vert < \frac{2\sqrt{2}}{\delta} \sum_{N=M}^\infty \frac{\sqrt{N}}{(\alpha(N-1)^2-\lambda_k)^2}<\infty
\]
Therefore
\[
\xi_{K+1}\tilde v_k(K+1) - \xi_M \tilde v_k(M)=\sum_{N=M}^K \xi_{N+1}\tilde v_k(N+1) - \xi_N\tilde v_k(N)\to u
\]
as $K\to\infty$, so we have that $\xi_{K+1}\tilde v_k(K+1)\to v$ in $\ell^2(\mb{N})$ as $K\to\infty$. By continuity of the norm, we have $\Vert v \Vert=1$. 

Now we show that $v$ is an eigenvector of $B_\alpha^{\mc{O}}$ with eigenvalue $\lambda_k$. We have that
\[
B_\alpha^{\mc{O}} \tilde v_k(N) = \lambda_k(N)\tilde v_k(N) - (e_N^T v_k(N))e_{N+1}.
\]
so
\begin{align*}
\Vert B_\alpha^{\mc{O}} \tilde v_k(N) - \lambda_k v\Vert &\leq \Vert \lambda_k(N)\tilde v_k(N) - \lambda_k v\Vert + \vert e_N^T v_k(N)\vert\\
& \leq \lambda_k(N)\Vert v_k(N) - v\Vert + \vert \lambda_k(N)-\lambda_k\vert \Vert v\Vert + \vert e_N^T v_k(N)\vert
\end{align*}
Since each term on the right hand side converge to $0$ as $N\to\infty$, we conclude that $\lim_{N\to\infty} B_\alpha^{\mc{O}} \tilde v_k(N)= \lambda_k v$. 
By Lemma \ref{lemEvenOdd}, $B_\alpha^{\mc{O}}$ is self-adjoint and hence closed. We have verified that $\tilde v_k(N)\to v$ and $B_\alpha^{\mc{O}} \tilde v_k(N)\to \lambda_k v$, so $B_\alpha^{\mc{O}} v= \lambda_k v$ because $B_\alpha^{\mc{O}}$ is a closed operator.

\end{proof}

\section{Examples}

In this section we discuss further examples including the Wisconsin breast cancer dataset \cite{street1993nuclear} and the diabetes dataset \cite{efron2004least}. Often, 2D scatterplots reveal clusters within datasets. To imitate this in $\Lt$, observe that any linear isometry $\Phi:\R^d\to \Lt$ preserves convex hulls, and for any set of functions $\{f_i\}_{i=1}^n\subset\Lt$ the convex hull of this collection in $\Lt$ is contained in the set
\[
\{f\in \Lt: \min\{f_i(t)\}_{i=1}^n\leq f(t)\leq \max\{f_i(t)\}_{i=1}^n\text{ for all }t\in[0,1]\}.
\]
Therefore, to visualize separation of classes, we visualize ``bands'' demarcated by the ``upper'' and ``lower'' envelope functions
\[
u(t) = \max\{f_i(t)\}_{i=1}^n \text{ and }l(t) =  \min\{f_i(t)\}_{i=1}^n.
\]
We illustrate these bands for the iris dataset in Figures \ref{fig3} and  \ref{fig4}

\begin{center}
\begin{figure}
\centering
\includegraphics[scale=0.4]{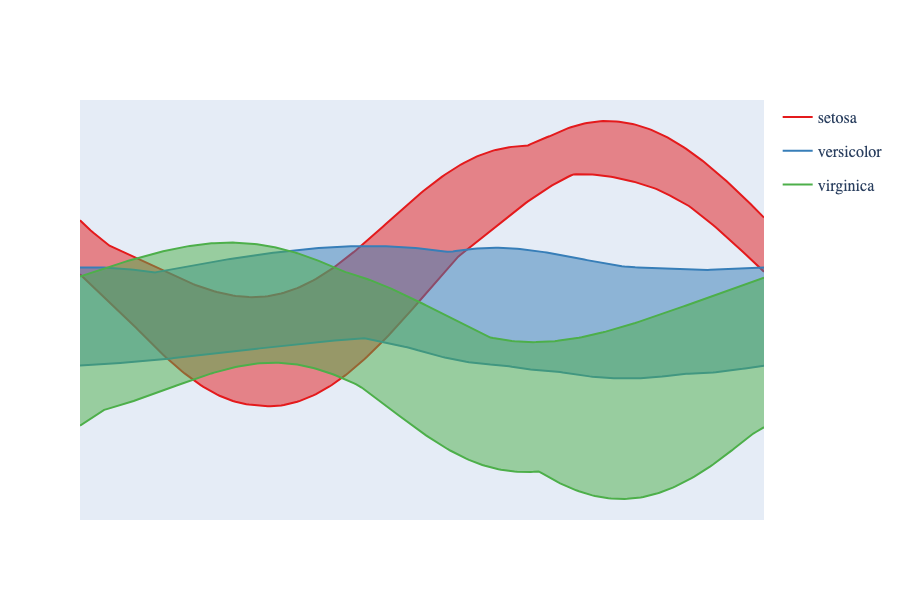}
\caption{Bands for the Andrews plots of the Iris dataset.}
\label{fig3}
\end{figure}
\end{center}

\begin{center}
\begin{figure}
\centering
\includegraphics[scale=0.4]{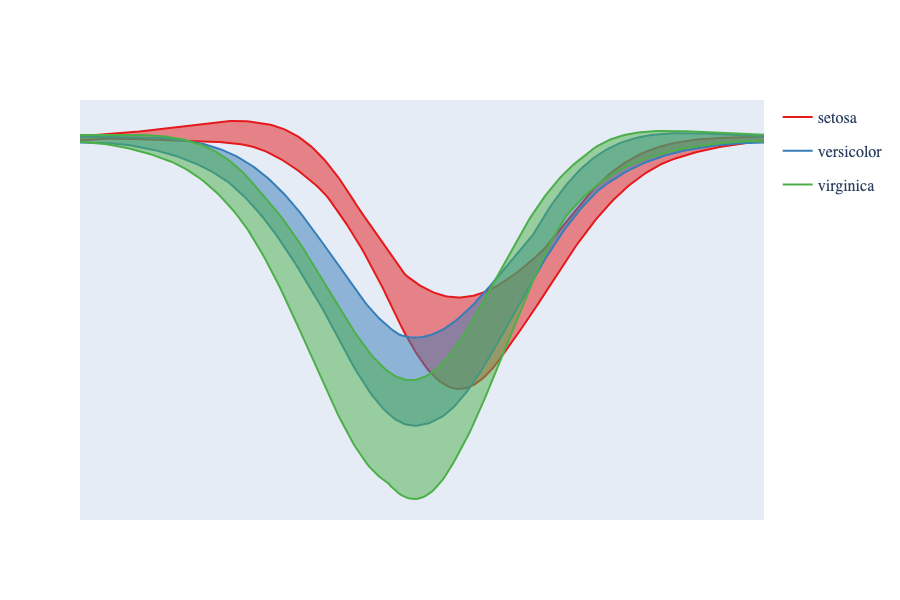}
\caption{Bands of Andrews plots with spatial-spectral smoothing for the Iris dataset.}
\label{fig4}
\end{figure}
\end{center}

\subsection{Breast cancer dataset}
The Wisconsin breast cancer dataset consists of 569 examples in 30 dimensions. There are two classes: malignant and benign tumors. Figures \ref{fig5}, \ref{fig6}, \ref{fig7}, and \ref{fig8} depict the different Andrews plots and the ``bands'' for the two different classes. Using these plots, we verify that the two classes admit some sort of overlap, but the bulk of the two classes seem well separated from each other. We note that the Andrews plots with spatial-spectral smoothing achieve a more-localized largest gap between the different classes. This ultimately leads to a more interpretable plot since the largest gap (i.e. $L^\infty([0,1])$ distance is easy to visualize.

\begin{center}
\begin{figure}
\centering
\includegraphics[scale=0.4]{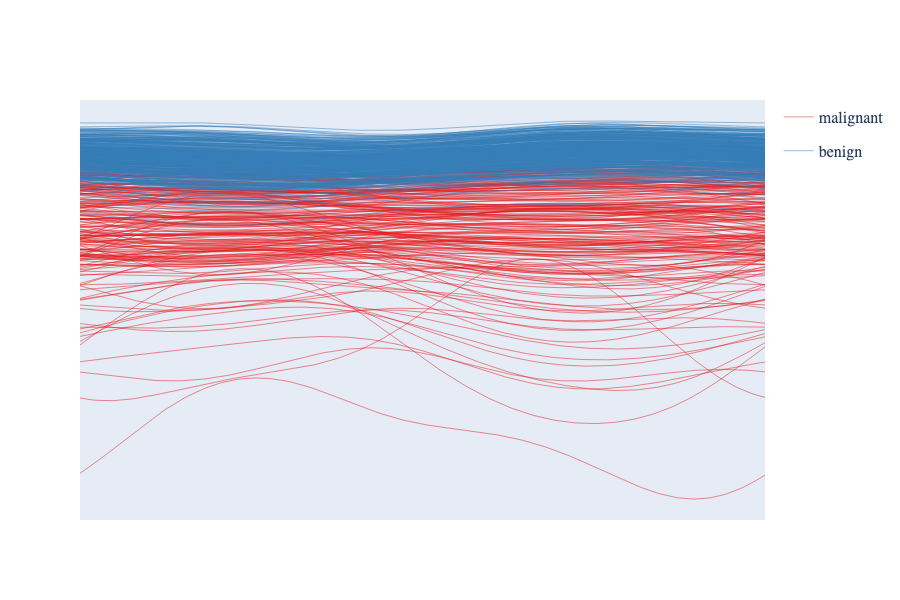}
\caption{Andrews plots of the Wisconsin breast cancer dataset.}
\label{fig5}
\end{figure}
\end{center}

\begin{center}
\begin{figure}
\centering
\includegraphics[scale=0.4]{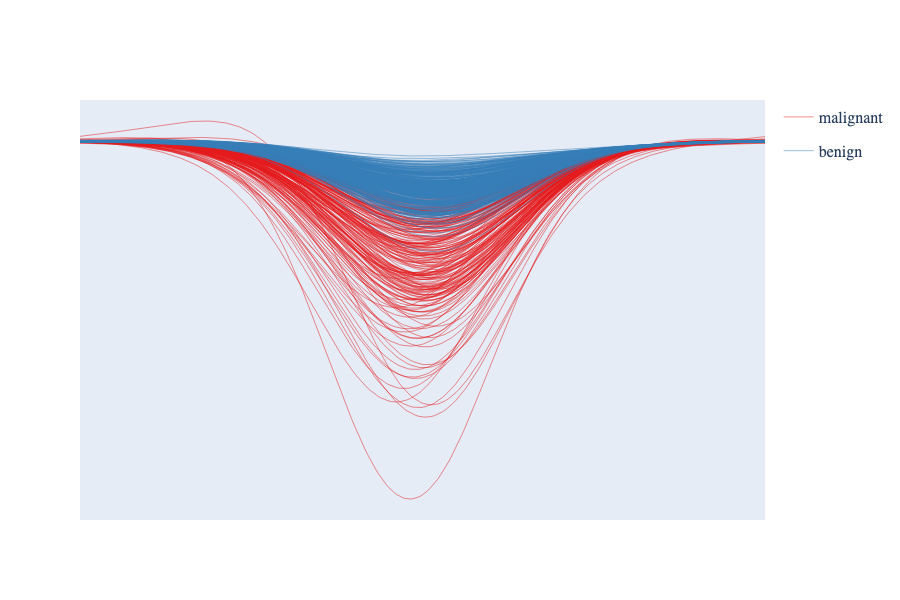}
\caption{ Andrews plots with spatial-spectral smoothing for the Wisconsin breast cancer dataset.}
\label{fig6}
\end{figure}
\end{center}

\begin{center}
\begin{figure}
\centering
\includegraphics[scale=0.4]{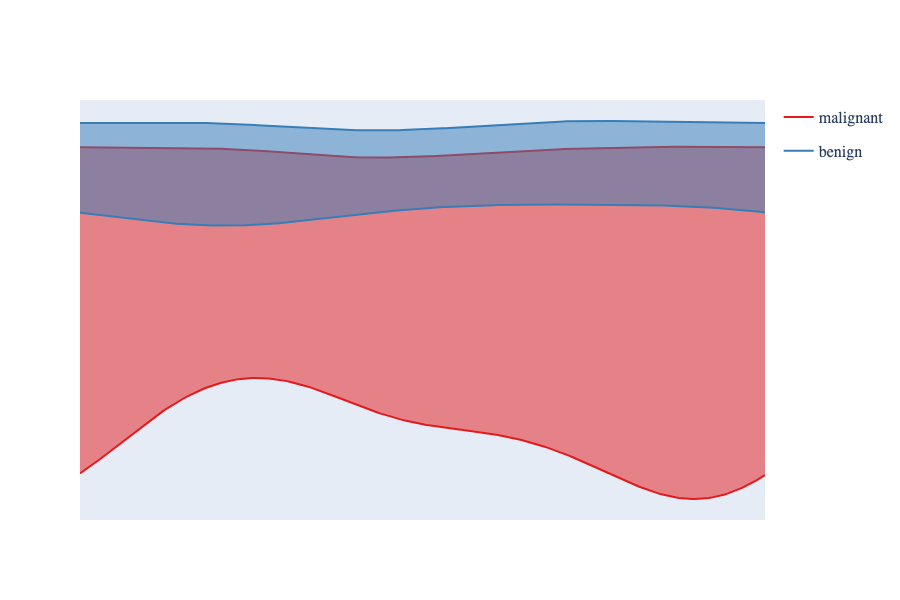}
\caption{Bands for the Andrews plots of the Wisconsin breast cancer dataset.}
\label{fig7}
\end{figure}
\end{center}

\begin{center}
\begin{figure}
\centering
\includegraphics[scale=0.4]{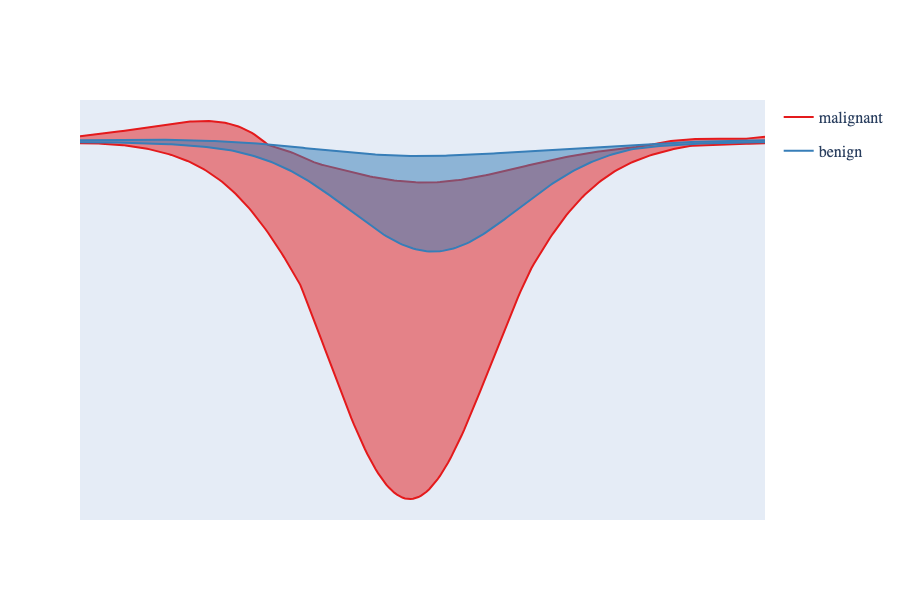}
\caption{Bands of Andrews plots with spatial-spectral smoothing for the Wisconsin breast cancer dataset.}
\label{fig8}
\end{figure}
\end{center}

\subsection{Diabetes dataset}
The diabetes dataset consists of 442 examples in 10 dimensions. The target variables are integers in the range of $25$ to $356$. We replace these targets with indicators for quartile ranges. That is, we set the target variable to $Q1$ if the original target is in the first quartile and so on. Figures \ref{fig9}, \ref{fig10}, \ref{fig11}, and \ref{fig12} depict the different Andrews plots and the ``bands" for the data coming from the different ranges.  We note that the Andrews plots with spatial-spectral smoothing exaggerate and localize differences as compared to the standard Andrews plots. For regression problems, we do not fully expect separable clusters to emerge, but the envelopes for the different quartiles are exhibit distinct features.

\begin{center}
\begin{figure}
\centering
\includegraphics[scale=0.4]{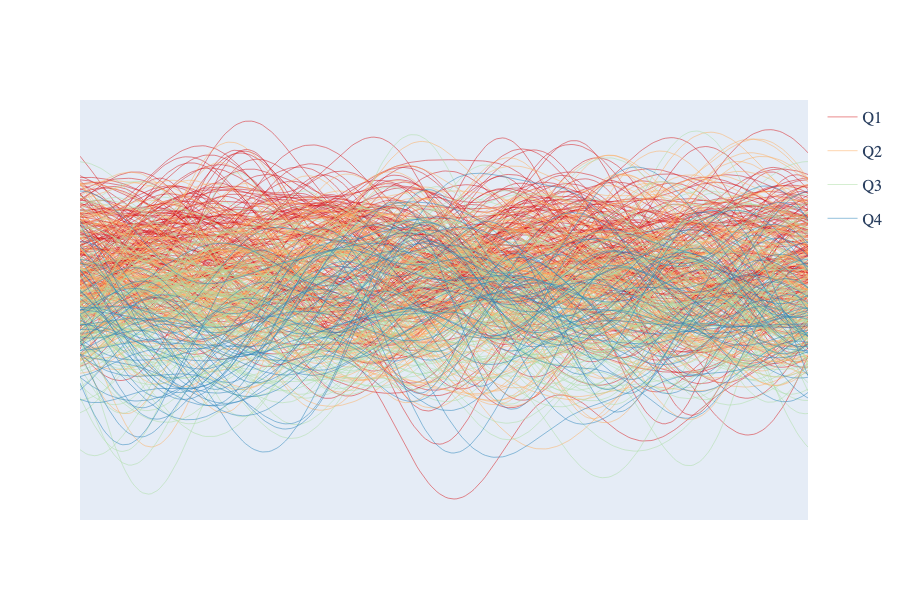}
\caption{Andrews plots of the diabetes dataset.}
\label{fig9}
\end{figure}
\end{center}

\begin{center}
\begin{figure}
\centering
\includegraphics[scale=0.4]{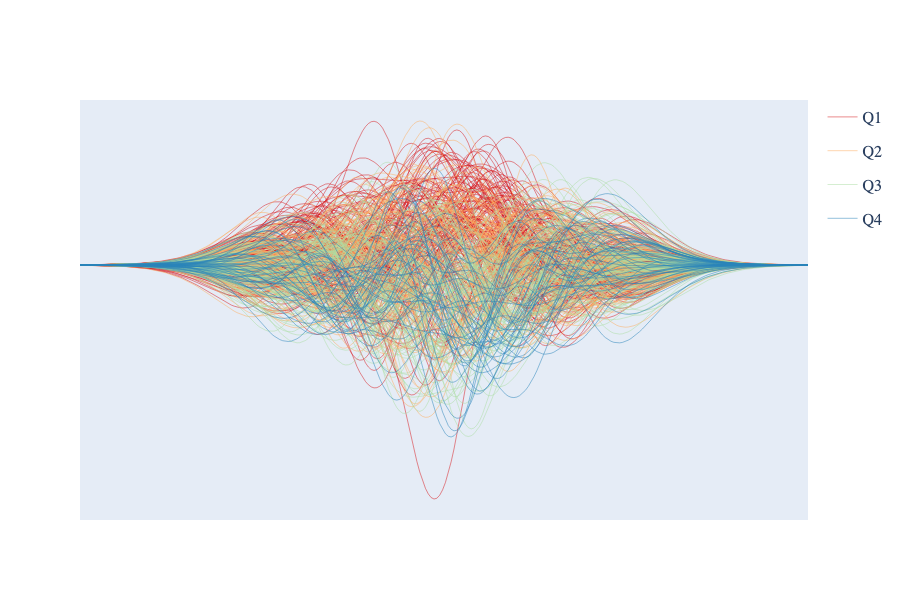}
\caption{Andrews plots with spatial-spectral smoothing for the diabetes dataset.}
\label{fig10}
\end{figure}
\end{center}

\begin{center}
\begin{figure}
\centering
\includegraphics[scale=0.4]{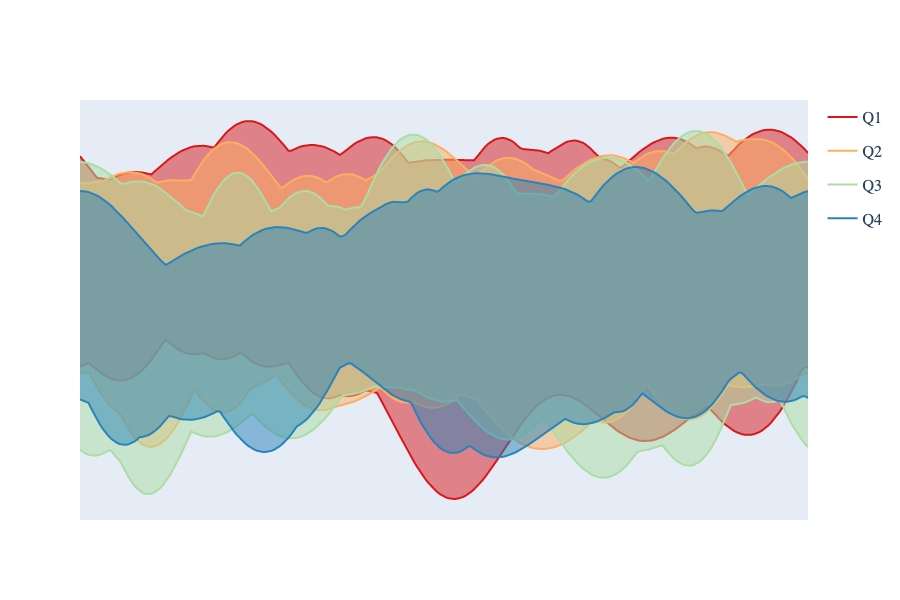}
\caption{Bands for the Andrews plots of the diabetes dataset.}
\label{fig11}
\end{figure}
\end{center}

\begin{center}
\begin{figure}
\centering
\includegraphics[scale=0.4]{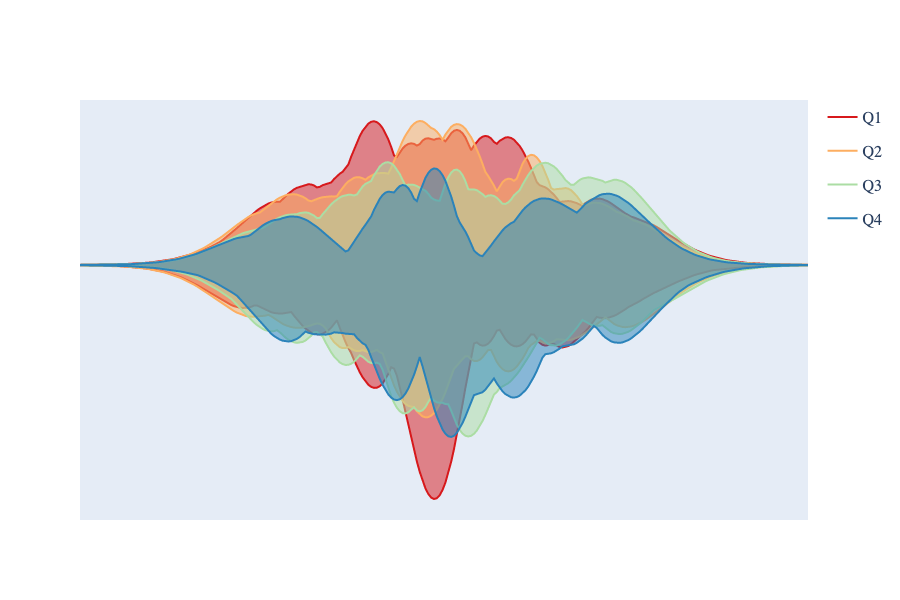}
\caption{Bands of Andrews plots with spatial-spectral smoothing for the diabetes dataset.}
\label{fig12}
\end{figure}
\end{center}

\section{Conclusion}

Since we employ PCA coefficients, it is natural to consider how these maps perform with kernel PCA. \cite{scholkopf1998nonlinear}. For future work, we are interested in determining how spatial-spectral smoothing assists visualization using kernel PCA coefficients. 

In our examples, we observe that the $L^\infty([0,1])$ distance is pronounced for the Andrews plots with spatial-spectral smoothing. In future work, we would like to determine if functions with finite expansions in the $B_\delta$ basis admit better $L^\infty([0,1])$ than trigonometric polynomials. For a trigonometric polynomial $f$, we know that
\[
\frac{C}{\sqrt{d}}\Vert f \Vert_{L^\infty([0,1])} \leq \Vert f\Vert_{\Lt} \leq \Vert f \Vert_{L^\infty([0,1])}. 
\]
where $d$ is the maximum degree of the trigonometric polynomial $f$. This follow from \cite{erdos1962inequality}.

Finally, there is one remaining ambiguity in even the Andrews plots with spatial-spectral smoothing due to the sign of the eigenfunctions. This means that there are $2^{d-1}$ possible optimal plots. In the future, it would be interesting to find a meaningful choice of signs for the purposes of visualization,

\section*{Acknowledgments}
We would like to thank Jameson Cahill and Mark Lammers for helpful suggestions and discussions.

\bibliographystyle{plain}
\bibliography{numericalAndrews}
\end{document}